\newif\ifpaper
\title {GKZ discriminant and Multiplicities}
\author{Jesse Huang and Peng Zhou}
\date{\today}
\DeclareMathOperator{\codim}{codim}
\DeclareMathOperator{\coker}{coker}
\DeclareMathOperator{\cone}{cone}
\DeclareMathOperator{\conv}{conv}
\DeclareMathOperator{\Coh}{Coh}
\DeclareMathOperator{\Ext}{Ext}
\DeclareMathOperator{\Fuk}{Fuk}
\DeclareMathOperator{\Hom}{Hom}
\DeclareMathOperator{\Log}{Log}
\DeclareMathOperator{\rank}{rank}
\renewcommand{\span}{\z{span}}
\DeclareMathOperator{\vol}{vol}
\newcommand{\NN}{\mathsf{N}}
\newcommand{\MM}{\mathsf{M}}
\newcommand{\bSigma}{ {\bf \Sigma} }
\newcommand{\bsigma}{ {\hat \sigma} }
\newcommand{\Zv}{{\mathbb{Z}^\vee}}
\newcommand{\ZvN}{{(\Zv)^N}}
\newcommand{\Lv}{L^\vee}
\newcommand{\Ga}{\Gamma}
\newcommand{\qGa}{{/\Gamma}}
\newcommand{\z}{\text}
\newcommand{\la}{\langle}
\newcommand{\ra}{\rangle}
\newcommand{\wt}{\widetilde}
\newcommand{\h}{\hat}
\newcommand{\fcal}{\mathcal{F}}
\newcommand{\wcal}{\mathcal{W}}
\newcommand{\A}{\mathbb{A}}
\newcommand{\C}{\mathbb{C}}
\newcommand{\R}{\mathbb{R}}
\newcommand{\Z}{\mathbb{Z}}
\newcommand{\Q}{\mathbb{Q}}
\newcommand{\F}{\mathbb{F}}
\newcommand{\T}{\mathbb{T}}
\renewcommand{\P}{\mathbb{P}}
\newcommand{\xto}{\xrightarrow}
\newcommand{\RM}{\backslash}
\newcommand{\into}{\hookrightarrow}
\newcommand{\onto}{\twoheadrightarrow}
\newcommand{\bea}{\begin{eqnarray*} }
\newcommand{\eea}{\end{eqnarray*} }
\newcommand{\be}{\begin{equation} }
\newcommand{\ee}{\end{equation} }
\newcommand{\bp}{\begin{proposition}}
\newcommand{\ep}{\end{proposition}}
\newcommand{\bt}{\begin{theo}}
\newcommand{\et}{\end{theo}}
\newcommand{\btu}{\begin{theou}}
\newcommand{\etu}{\end{theou}}
\newcommand{\bpf}{\begin{proof}}
\newcommand{\epf}{\end{proof}}
\newcommand{\bl}{\begin{lemma}}
\newcommand{\el}{\end{lemma}}
\newcommand{\bc}{\begin{corollary}}
\newcommand{\ec}{\end{corollary}}
\newcommand{\bd}{\begin{definition}}
\newcommand{\ed}{\end{definition}}
\newcommand{\bex}{\begin{example}}
\newcommand{\eex}{\end{example}}
\newcommand{\bA}{\left(\begin{array}}
\newcommand{\eA}{\end{array}\right)}
\newcommand{\bma}{\begin{bmatrix}}
\newcommand{\ema}{\end{bmatrix}}
\newcommand{\bcd}{\begin{tikzcd}}
\newcommand{\ecd}{\end{tikzcd}}
\newcommand{\bcs}{\begin{cases}}
\newcommand{\ecs}{\end{cases}}
\newcommand{\bee}{\begin{eqnarray} }
\newcommand{\eee}{\end{eqnarray} }
\newcommand{\brem}{\begin{remark}}
\newcommand{\erem}{\end{remark}}
\newcommand{\bnum}{\begin{enumerate}}
\newcommand{\enum}{\end{enumerate}}
\newtheorem*{theou}{Main Theorem}
\newtheorem{theo}{Theorem}[section]
\newtheorem{lemma}[theo]{Lemma}
\newtheorem{corollary}[theo]{Corollary}
\newtheorem{proposition}[theo]{Proposition}
\newtheorem{definition}[theo]{Definition}
\newtheorem{remark}[theo]{Remark}
\newenvironment{example}
  {\pushQED{\qed}\examplex}
  {\popQED\endexamplex}
\theoremstyle{plain}
\numberwithin{equation}{section}
\newcommand{\CS}{{\C^*}}
\newcommand{\In}{\subset}
\renewcommand{\cong}{\simeq}
\newcommand*{\hrlen}{5}
\newcommand*{\hramp}{3}
\tikzset{
asdstyle/.style={blue,thick},
righthairs/.style={postaction={decorate,draw,decoration={border,amplitude=\hramp,segment length=\hrlen,angle=-90,pre=moveto,pre length=\hrlen/2}}},
lefthairs/.style={postaction={decorate,draw,decoration={border,amplitude=\hramp,segment length=\hrlen,angle=90,pre=moveto,pre length=\hrlen/2}}},
righthairsnogap/.style={postaction={decorate,draw,decoration={border,amplitude=\hramp,segment length=\hrlen,angle=-90}}},
lefthairsnogap/.style={postaction={decorate,draw,decoration={border,amplitude=\hramp,segment length=\hrlen,angle=90}}},
graphstyle/.style={thick},
arrowstyle/.style={thick,decorate,decoration={snake,amplitude=1.7,segment length=10pt,post length=.5mm,pre length=0}},
genmapstyle/.style={thick,-stealth'},
arrhdstyle/.style={thick},
exceptarcstyle/.style={red, ultra thick},
dualquiverstyle/.style={thick,->},
patstyle/.style={pattern color = gray, pattern = north east lines, opacity=0.3}
}
\begin{document}

\begin{abstract}
Let $T=(\C^*)^k$ act on $V=\C^N$ faithfully and preserving the volume form, i.e.  $(\C^*)^k \into \text{SL}(V)$. On the B-side, we have toric stacks $Z_W$ (see Eq. \ref{eq:ZW})labelled by walls $W$ in the GKZ fan, and $Z_{/F}$ labelled by faces of a polytope corresponding to minimal semi-orthogonal decomposition (SOD) components. The B-side multiplicity $n^B_{W,F}$, well-defined by a result of Kite-Segal \cite{kite-segal}, is the number of times  $\Coh(Z_{/F})$ appears in a complete SOD of $\Coh(Z_W)$.  On the A-side, we have the GKZ discriminant loci components $\nabla_F \In (\C^*)^k$, and its tropicalization $\nabla^{trop}_{F} \In \R^k$. The A-side multiplicity $n^A_{W, F}$ is defined as the multiplicity of the tropical complex $\nabla^{trop}_{F}$ on wall $W$.  We prove that $n^A_{W,F} = n^B_{W,F}$, confirming a conjecture in  Kite-Segal \cite{kite-segal} inspired by \cite{aspinwall2017mirror}. Our proof is based on the result of Horja-Katzarkov \cite{horja2022discriminants} and a lemma about B-side SOD multiplicity, which allows us to reduce to lower dimension just as in A-side \cite{GKZ-book}[Ch 11].

%$Z_W = [V^{\lambda_W} //_W (T/\lambda_W)]$. 

\end{abstract}

\maketitle

\section{Introduction}
Homological mirror symmetry (HMS) for toric varieties is a well-studied subject, yet it can still offer new insights to classical problems. Our paper concerns a numerical conjecture that is a shadow of a full categorical conjecture \cite{aspinwall2017mirror, kite-segal, horja2022discriminants}. We first sketch the categorical conjecture. On the B-side,  we have a toric Calabi-Yau GIT problem and we study the derived equivalences and semi-orthogonal decompositions that arise from wall-crossing. On the A-side we have a fiberwise partially wrapped Fukaya category \cite{abouzaid-auroux2111homological} associated to a fibration $\pi: Y \to B$ and a superpotential $W: Y \to \C$, and we want to study the ``pushforward'' of $\Fuk(Y, W)$
along $\pi$ to get a Fukaya category on $B$ with categorical coefficient. For $b \in B$, let $Y_b$ be the fiber over $b$ and $W_b$ the restriction of $W$, then there is a discriminant loci $\nabla \In B$ where the fiberwise wrapped Fukaya category $\Fuk(Y_b, W_b)$ become ``degenerate''. 
The full HMS predicts that, for each wall crossing $W$ (corresponding to certain asymptotic region of $\nabla$), there is a B-side perverse schober coming from SOD of $\Coh(Z_W)$ where $Z_W$ is certain toric stack associated to $W$ (see Eq. \eqref{eq:ZW}), and there is an A-side analog coming from a transversal curve (annuli) intersecting the discriminant loci $\nabla$ in the asymptotic region of the wall. Our main theorem is a verification that the two schobers has the same number of singularities of each type.

The full conjecture is explained beautifully in the Kite-Segal paper \cite{kite-segal}, which is based on the physicists conjecture \cite{aspinwall2017mirror}. In the remaining part of the introduction, we will focus on the numerical conjecture and state our main results.

\subsection{Main Result}

Our input data is a collection of lattice points $q_1, \cdots, q_N$ in $\Z^k$, where $q_i=(q_{i1},\cdots,q_{ik})$, called weights. We assume that $q_i$ generate the linear space $\R^k$, and they satisfy the balanced condition $\sum_i q_i=0$. Equivalently, the input data is a full rank linear map 
$$ Q: \Z^N \to \Z^k, \quad e_i \mapsto q_i,  $$
such that $Q(1,\cdots,1)=0$. 
From this input data, we can set up two problems (called A-side and B-side) as follows. 

First, we consider the dual of $Q$, $Q^\vee: \Z^k \to \Z^N$, and let $A$ denote the cokernel map
$$ A: \Z^N \to \NN $$
where $\NN$ may have torsion if $Q$ is not surjective. 

For the sake of the introduction, we assume that $Q$ is surjective and thus $\NN$ is a lattice. We further assume that for any $i \in [N]=\{1,\cdots,N\}$,  $a_i=A(e_i)$ are distinct. This is for easy quotation of results in \cite{GKZ-book}. We also abuse notation and let $A$ denote the set $\{a_i\}$. Both of the assumptions can be easily removed, as described in the appendix. 

\subsubsection{The A-side setup}
The A-side problem concerns a holomorphic function $W: Y \to \C$ and a fibration $\pi: Y \to B$. Here $Y=(\C^*)^N, B=(\C^*)^k$, and 
$$ W: (\C^*)^N \to \C, \quad W(z) = z_1 + \cdots + z_N, $$
$$ \pi=Q_\CS: (\C^*)^N \to (\C^*)^k, \quad Q_\CS(z) = (\prod_{i=1}^N z_i^{q_{i1}}, \cdots, \prod_{i=1}^N z_i^{q_{ik}}). $$

Gelfand-Kapranov-Zelevinsky \cite{GKZ-book} defined a polynomial $E_A$ of $N$ variables, called principal $A$-determinant, whose vanishing loci is a variety $\wt \nabla_{GKZ} \In (\C^*)^N$. This variety is the preimage, under $\pi$, of a variety $\nabla_{GKZ} \In B$. 

Let $\Pi = \conv(\{0\} \cup A)$. \footnote{Our definition of $\Pi$ is different from \cite{kite-segal}, which defines $\Pi:=\conv(A)$. We use this convention to better describe multiplicities using polytopes. See Section \ref{ss: recursive}. }
%By the Calabi-Yau condition $\sum_i q_i=0$, $\Pi$ is a $k$ dimensional polytope in an affine $\Z^{k-1}$. 
Let $\fcal_0$ denote the faces of $\Pi$ that contains $0$ ($\Pi$ itself is a face), and let $\fcal \In \fcal_0$ denote the set of ``minimal faces'', where a $F$ is minimal if one remove any point in $F \cap A$ then the remaining points still generate the span of $F$. (c.f. Definition \ref{d:minface-relsub})Note that $\{0\}$ is always a minimal face, since $\{0\} \cap A = \emptyset$. 

For each minimal face $F$, there is an irreducible variety $\nabla_F \In B$ 
%{\red put forward reference where is this defined}
, and the divisor $\nabla_{GKZ}$ can be decomposed as
$$ \nabla_{GKZ} = \sum_{F \in \fcal} m_F \cdot \nabla_F, $$
where $m_F$ is some integer (not to be confused with our A-side multiplicity). In \cite{aspinwall2017mirror}, the multiplicity $m_F$ is interpreted as the rank of $K_0$ of a certain Higgs problem (Section \ref{ss:Coulomb}). %Even better, if $F \neq \Pi$, then $\nabla_F$ is the pullback along a certain torus quotient $\pi_F: B \to B_F$ of a discrimiant $\nabla'_F \In B_F$ for a problem of lower rank. 

The tropicalization $\nabla^{trop}_{GKZ} \In \R^k$ of $\nabla_{GKZ} \In (\C^*)^k$ is the codimension-1 part of the GKZ-fan. Given a subvariety $V \In (\C^*)^k$, we define its tropicalization as follows. Consider the map
$ \Log: (\C^*)^k \to \R^k$ by componentwise $z \mapsto \log|z|$. Then, we define $V^{trop} = \lim_{t \to 0^+} t \cdot \Log(V)$. As shown in \cite{GKZ-book}, the Newton polytope for the defining polynomial of $\nabla_{GKZ}$ is the secondary polytope, whose exterior normal fan is the GKZ fan $\Sigma_{GKZ} \In \R^k$. 

The tropicalization method \cite{mikhalkin2004decomposition} outputs not just a set, but a polyhedral complex with multiplicities. It allows for easy computation of intersection multiplicities. 

\begin{example}
See Figure \ref{fig:ex_trop}.  The tropicalization of $x+y=1$ in $(\C^*)^2$ is a tropical curve with weight $1$ on each leg, where the three legs represent the approximate equations $x=1, y=1, x+y=0$. 
The tropicalization of $x^2+y^3=1$ is a tropical curve consisting of three legs, representing the three region $x^2 \approx 1, y^3 \approx 1$ and $x^2+y^3 \approx 0$.  The weight $2$ over the leg $x^2 \approx 1$ means there are two branches of complex solutions $x=\pm 1$ above the tropical 'shadow'. 

\begin{figure}
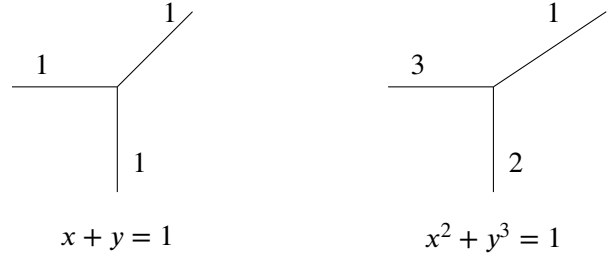

    \centering
    \tikz{ \begin{scope}
    \draw (0,0) -- (-1.4,0);
    \draw (0,0) -- (0,-1.4);
    \draw (0,0) -- (1,1);
    \node at (-1,0.3) {$1$};
    \node at (0.3,-1) {$1$};
    \node at (0.7,1) {$1$};
    \node at (0, -2) {$x+y=1$};
    \end{scope}
    \begin{scope}[shift={(5,0)}]
    \draw (0,0) -- (-1.4,0);
    \draw (0,0) -- (0,-1.4);
    \draw (0,0) -- (1.5,1);
    \node at (-1,0.3) {$3$};
    \node at (0.3,-1) {$2$};
    \node at (0.8,1) {$1$};
    \node at (0, -2) {$x^2+y^3=1$};
    \end{scope}
    }
    \caption{Tropicalization gives balanced polyhehral complex.  The integers labels multiplicities on each cell. \label{fig:ex_trop}}
    
\end{figure}
\end{example}

Let $W$ be a wall in the GKZ fan. We can ask for the multiplicity of a tropical complex $\nabla^{trop}_{GKZ}$ or $\nabla^{trop}_F$ along $W$, and denote them by $n^A_{W,GKZ}, n^A_{W,F}$. We get
$$ n^A_{W,GKZ} = \sum_{F \in \fcal} m_F \cdot n^A_{W,F}. $$

It is not too hard to compute $n^A_{W,GKZ}$ in terms of volumes of some polytope, but to compute $n^A_{W,F}$, some extra work is needed. The key observation, made in \cite{GKZ-book}[Ch 11], is that $\nabla_F$ actually comes from a lower dimensional problem, and one can solve for $n^A_{W,F}$ recursively (or express the result as an alternating sum, if $m_F = 1$).  We bring this idea to the B-side and show that \emph{the same} recursive relation also holds for SOD multiplicity, as predicted by mirror symmetry. 

\subsubsection{The B-side setup}
Let $T=(\C^*)^k$ acts on $\C^N$ with weights $q_1, \cdots, q_N$. There is a GKZ fan in $\R^k$ that labels all possible GIT quotients. The GKZ fan is the ``shadow'' of the $\R_{\geq 0}^N$ (the moment polytope of $\C^N$ for the $(\C^*)^N$-action) under the map $Q_\R: \R^N \to \R^k$. 

The GIT quotient stacks $X_C$ for different chambers $C \In \R^r$ are derived equivalent. For two adjacent chambers $C_1, C_2$ separated by a wall $W$, we have $\Z$ many equivalence functors 
$$ \phi_i = \phi_{i,W, C_1, C_2}: \Coh(X_{C_1}) \to \Coh(X_{C_2}), i \in \Z $$
where $i$ labels of a choice of window subcategory \cite{herbst2008phases, segal2011equivalences, halpern2015derived, BFK} in the GIT quotient $X_W = [V/\!/\!_\theta T]$, $\theta \in W$. Let $\lambda_W \in X_*(T)$ be a 1-parameter subgroup in $T$, such that $\lambda_W \perp \span_\R(W)$. Let $V^{\lambda_W}$ denote the fixed loci of $\lambda_W$, then $T / \lambda_W$ acts on $V^{\lambda_W}$. We define the GIT quotient stack 
\begin{equation}
    \label{eq:ZW}
    Z_W = [V^{\lambda_W} /\!/\!_W (T/\lambda_W)]
\end{equation} 
See also definition \ref{d:Z-variety}.

It is interesting to study the autoequivalence induced by window shift $\mu = \phi_{1}^{-1} \phi_0$ (for simplicity, we fix a window shift here). It comes from a spherical functor  
$$ S:  \Phi =\Coh(Z_W) \to \Psi = \Coh(X_{C_1}) $$
in that $\mu = \cone(1 \to S S^l)[-1]$, where $\Phi$ is called the vanishing cycle category, and $\Psi$ the nearby cycle category. Hence, this defines a B-model perverse schober over a disk.

Halpern-Leistner and Shipman \cite{halpern2016autoequivalences} showed that if $\Phi$ admits a semi-orthogonal decomposition $\Phi = \la T_1 ,\cdots, T_m \ra$, then we have several ``fractional'' spherical twists $S_i: T_i \to \Psi$, and the monodromy $\mu = \mu_1 \circ \cdots \circ \mu_m$. 

Kite and Segal \cite{kite-segal} identified the possible SOD factors $Z_{/F}$ for $\Coh(Z_W)$, where $F$ runs through minimal faces of the polytope $\Pi$. They have shown that different SODs have a Jordan-H\"older property, namely the multiplicity $n_{W, F}^B$ of $\Coh(Z_{/F})$ in $\Coh(Z_W)$ is well-defined. They conjectured that $n^A_{W,F} = n^B_{W,F}$ and proved it when rank $k$ of the action torus is $\leq 2$. 

\subsubsection{Statement of Results}
Our first result is about any toric GIT problem, possibly non-CY. 

Let $Q: \Z^N \to \Z^k$ be a toric GIT problem, $C$ a chamber in the GKZ fan and let $H \In \R^k$ be a \emph{relevant subspace} i.e. $H = \cone(\{q_i: q_i \in H \})$ (Definition \ref{d:minface-relsub}). Let $X_C$ be the GIT quotient corresponding to the chamber $C$, and let $Z_H$ be the SOD component for the relevant subspace $H$. We are interested in the SOD multiplicity of $\Coh(Z_H)$ in $\Coh(X_C)$, denoted as $[X_C: Z_H]$.  Let $[N]_H = \{i \in [N]: q_i \in H \}$ be the index set of the weights lying on $H$. Then we have a \emph{Coulomb GIT problem} (Section \ref{ss:Coulomb})
$$ Q_{/H}: \Z^N / \Z^{[N]_H} \to \Z^k / (\Z^k \cap H)$$
where the chamber $C$ descends to a chamber $C/H$, with corresponding quotient $X_{C/H}$, the relevant subspace $H$ quotient to a point $H/H$, and $Z_{H/H}=pt$. 

\begin{theo}[Lemma \ref{l:B-pullback}]
Let $Q$ be any toric GIT problem, $C$ a chamber, $H$ a relevant subspace. Then the multiplicity is invariant under passing to the Coulomb problem $Q_{/H}$
$$[X_C: Z_H] = [X_{C/H}: Z_{H/H}]. $$
\end{theo}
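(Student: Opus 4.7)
The plan is to recast the multiplicity $[X_C : Z_H]$ as an explicit lattice-point count via the window-subcategory framework, and then produce a natural bijection with the analogous count for the Coulomb problem $Q_{/H}$. Following Halpern-Leistner and the window approach to GIT SODs used by Kite-Segal, one can describe $\Coh(X_C)$ as generated by line bundles indexed by characters $\chi$ in the character lattice $M$ of $T = \csk$ lying in a fundamental window polytope $\Delta_C \subset M \otimes \R$ whose bounding hyperplanes are determined by $\theta \in C$ and the weights $q_1, \ldots, q_N$. The SOD components of $\Coh(X_C)$ group these characters according to which relevant subspace they are attached to, and $[X_C : Z_H]$ counts how many distinct copies of $\Coh(Z_H)$ appear among such groups.

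First I would establish a window-theoretic formula: $[X_C : Z_H]$ equals the number of $(H^\perp \cap M)$-cosets of characters in $\Delta_C \cap M$ that witness the $Z_H$ piece of the SOD; equivalently, each such coset contributes precisely one copy of $\Coh(Z_H)$ after passing to the quotient by the subtorus of $T$ cut out by $H$. Next, under the Coulomb quotient the character lattice becomes $M/(M \cap H^\perp)$ and the weights $\{q_i : i \in [N]_H\}$ are deleted, so the fundamental window $\Delta_{C/H}$ is precisely the image of $\Delta_C$ with the bounding hyperplanes coming from the deleted weights removed. Since $Z_{H/H}$ is a point, $[X_{C/H} : Z_{H/H}]$ reduces to this residual lattice-point count, which agrees with the coset count on the original side.

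The main obstacle will be verifying that the bounding hyperplanes of $\Delta_C$ parallel to $H$, which come exactly from the weights $q_i \in H$, project correctly under $M \to M/(M \cap H^\perp)$ to give the correct window for $Q_{/H}$. Here the relevance hypothesis $H = \cone(\{q_i : q_i \in H\})$ is essential: it guarantees that the $H$-directional bounds of $\Delta_C$ are spanned precisely by the deleted weights, so that their removal under the Coulomb quotient loses no information. A secondary subtlety concerns possible torsion in $M / (M \cap H^\perp)$, which can force $X_{C/H}$ to be a genuine toric stack and demands tracking characters with their stabilizer multiplicities; once this stacky refinement is in place, the window bijection identifies the two counts tautologically, and the lemma follows.
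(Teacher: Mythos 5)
The proposal takes a genuinely different route from the paper, but it has a gap that I do not see how to close. You want to express $[X_C:Z_H]$ as a lattice-point count in a ``fundamental window polytope'' $\Delta_C$, with the SOD components grouping characters according to relevant subspaces, so that the multiplicity is a coset count that survives the Coulomb quotient. This first step is not an established result. In general (outside the quasi-symmetric setting of Halpern-Leistner--Sam and \v{S}penko--Van den Bergh), there is no single polytope $\Delta_C$ whose lattice points generate $\Coh(X_C)$ with the SOD pieces read off as lattice cosets. The Kite--Segal SOD is built inductively through wall-crossings, and the multiplicity $[X_C:Z_H]$ is well-defined only because of the Jordan--H\"older property; there is no known closed-form window-polytope formula for it of the kind your plan requires. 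You would essentially have to prove such a structural statement from scratch, which is at least as hard as the lemma itself, and your plan gives no route to it.

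The paper instead inducts on $\codim_\R H$ and uses a ``monotone decreasing run'' $C=C_0 \leadsto \cdots \leadsto C_m$ to a minimal phase: each wall $W_i$ contributes $d_{W_i}\,[Z_{W_i}:Z_H]$ to $[X_C:Z_H]$; only walls with $\span_\R W \supset H$ contribute, those walls' contributions are computed in a lower-dimensional Higgs problem where the inductive hypothesis applies, and under $\pi_H$ the surviving walls are exactly the downstairs walls with $d_W = d_{W/H}$. This is a purely wall-crossing bookkeeping argument and needs no global window description. There is also a smaller notational slip in your write-up: the character lattice of the Coulomb quotient torus is $\Lv/(\Lv\cap H)$, not $M/(M\cap H^\perp)$; $H$ lives in $\Lv_\R$ so $H^\perp$ sits in $L_\R$ and cannot be intersected with the character lattice $M=\Lv$.
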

Combining the above result, and the result in Horja-Katzarkov \cite{horja2022discriminants}
\begin{equation}
    \sum_F n^A_{W,F} \rank(Z_{/F}) = \sum_F n^B_{W,F} \rank(Z_{/F}) \label{eq:HK}
\end{equation} 
we can easily get the following main theorem
\bt[Theorem \ref{t: main}]
Let $W$ be a codimension one cone in the GKZ fan of a CY problem. For any minimal face $F$ of the polytope $\Pi$, let $n_{W,F}^A$ denote the intersection multiplicity (A-side multiplicity) defined by the tropical complex, and $n_{W,F}^B$ denote the semiorthogonal decomposition multiplicity (B-side multiplicity). Then $$n_{W,F}^A=n_{W,F}^B.$$
\et

In addition, we obtain a recursive formula relating multiplicities to the ranks of K-theory, hence volumes of stacky fans (see Section \ref{s:volume-of-fan}).

\begin{theo}[Proposition \ref{p:recursive}]
Let $C$ be a GKZ chamber, and $\bSigma$ the corresponding stacky fan. For each minimal face $F \in \fcal$, let $n_F:=[X_\bSigma: Z_{/F}]$ denote the SOD multiplicity of $\Coh(Z_{/F})$ in $\Coh(X_\bSigma)$. Then we have a system of linear equations labelled also by minimal faces $\fcal$, 
\begin{equation}
\rank([X_{\bSigma \cap F}]) := \sum_{F' \leq F} n_{F'} \rank([Z_{F/F'}]). 
\end{equation}
where equation labelled by $F$ only involves $n_{F'}$ with face $F' \leq F$.
\end{theo}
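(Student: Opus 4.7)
The plan is, for each minimal face $F\in\fcal$, to apply the Kite--Segal SOD to the sub-GIT problem obtained by restricting the weight data to $F$, take $K$-theoretic ranks, and then identify the resulting sub-problem multiplicities with the global $n_{F'}$ by invoking Lemma \ref{l:B-pullback}.

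First, I would restrict the input weights to $\{q_i : i\in[N]_F\}$. This gives a lower-rank toric GIT problem whose polytope is $F$ itself, whose set of minimal faces is exactly $\{F'\in\fcal : F'\leq F\}$, and whose GIT quotient for the restricted chamber is $X_{\bSigma\cap F}$. Applying the Kite--Segal SOD to $\Coh(X_{\bSigma\cap F})$ then yields
\begin{equation*}
\Coh(X_{\bSigma\cap F}) = \langle \Coh(Z_{F/F'}) : F' \leq F \rangle,
\end{equation*}
with multiplicity $[X_{\bSigma\cap F} : Z_{F/F'}]$ for each factor. Taking $K$-theoretic ranks, which are additive under semi-orthogonal decomposition, immediately produces
\begin{equation*}
\rank([X_{\bSigma\cap F}]) = \sum_{F' \leq F} [X_{\bSigma\cap F} : Z_{F/F'}]\cdot \rank([Z_{F/F'}]).
\end{equation*}

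Next, I would apply Lemma \ref{l:B-pullback} to identify $[X_{\bSigma\cap F}:Z_{F/F'}]$ with $n_{F'}=[X_\bSigma:Z_{/F'}]$. The key compatibility to verify is that restricting the full problem to the sub-problem at $F$ and then Coulomb-reducing by $F'\leq F$ agrees with first Coulomb-reducing the full problem by $F'$ and then restricting to the image of $F$; both routes produce the same quotient GIT problem in which the component in question is the top SOD factor. Since Lemma \ref{l:B-pullback} guarantees invariance of the SOD multiplicity under Coulomb reduction, the two counts agree, and substitution yields the desired equation. Triangularity is then automatic: when $F'=F$ the Coulomb reduction of the sub-problem collapses $F$ to a point, so $Z_{F/F}=\mathrm{pt}$ has rank $1$, while all remaining summands involve strictly smaller faces $F'<F$.

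\emph{Main obstacle.} The technical crux is precisely this compatibility between the two operations ``restrict to the sub-problem at $F$'' and ``Coulomb-reduce by $F'$''. One must verify that the Kite--Segal SOD component indexed by $F'$ in the restricted problem at $F$ really coincides with the object $Z_{F/F'}$ obtained by Coulomb-reducing the full problem by $F'$ and then passing to the image of $F$, so that Lemma \ref{l:B-pullback} actually transports the multiplicities. Once this identification of SOD factors is pinned down, additivity of $K$-theoretic rank under SOD delivers the recursive system together with its triangular structure.
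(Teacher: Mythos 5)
Your proposal matches the paper's argument: restrict the GIT problem to the minimal face $F$, apply the Kite--Segal SOD of $\Coh(X_{\bSigma\cap F})$, take $K$-theoretic ranks, and use Lemma~\ref{l:B-pullback} to identify $[X_{\bSigma\cap F}:Z_{F/F'}]$ with $n_{F'}$ (via the chain $[X_{\bSigma\cap F}:Z_{F/F'}]=[X_{\bSigma\cap F'}:Z_{F'/F'}]=[X_\bSigma:Z_{/F'}]$). The compatibility you flag as the main obstacle is exactly the content of the identity $\bSigma\cap F\cap F'=\bSigma\cap F'$ for $F'\leq F$, which the paper's proof uses in passing.
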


\subsection{Related Work}
Homological mirror symmetry for toric variety has been extensively studied, using Floer theoretic technique \cite{abouzaid2006homogeneous, hanlon2019monodromy, hanlon2020functoriality} and using the microlocal sheaf method \cite{FLTZ-morelli, FLTZ-hms, Ku16, zhou2019twisted, GPS3}, see a recent review using GIT quotient \cite{shende2021toric}. 

In the context of mirror symmetry to toric GIT problem, it is well-understood on the B-side \cite{herbst2008phases, BFK, halpern2015derived, segal2011equivalences} how to use window subcategory to do wall-crossing between adjacent chambers, and sometimes in nice cases (e.g. quasi-symmetric case) how to do wall-crossing among all chambers simultaneously \cite{halpern2020combinatorial, vspenko2019quasi-symm}. 

On the A-side mirror to toric GIT, if we use microlocal sheaf as the A-model, then thanks to the functoriality of coherent constructible correspondence \cite{bondal2006derived, FLTZ-morelli, Ku16, zhou2019twisted}, we can translate all the B-side VGIT result to the A-side \cite{zhou2020-flipflop, huangzhou2020-qsymm}, window categories into window skeletons etc. If we use the more traditional Fukaya category A-model, then the question is more subtle and harder to solve \cite{DKK, BFDKK, kerr2017homological}. The program of Ballard-Diemer-Favero-Katzarkov-Kerr is about matching the A-side and B-side SOD. 

In general, the above result on B-model (and on microlocal A-model) only sees the codimension-1 wall crossing, and did not see the entire GKZ discriminant locus, the recent work of Kite-Segal \cite{kite-segal} in some sense remedies the above deficiency. 

The numerical version of the full conjecture, i.e. $n^A_{W,F} = n^B_{W,F}$,  has been proven by Kite-Segal in the case when $\dim_\C T=1,2$. And Horja-Katzarkov \cite{horja2022discriminants} proved an integrated version of the desired equality (Eq. \ref{eq:HK}),  which does most of the heavy-lifting for us. 

To get the full categorical conjecture \cite{aspinwall2017mirror, kite-segal, horja2022discriminants}, one needs to identify the A-model SOD component mirror to the B-model counterpart $Z_{/F}$, and find a way to book-keep the relations between various SOD components. We leave these to future work. 

\subsection{Outline}
We will mainly work on the B-side. In the next section we introduce the necessary notation for toric GIT and review the notion of Coulomb and Higgs problems for a toric GIT, then we prove our main Lemma. Most of the content are review or slight generalization of \cite{kite-segal}. Then, in the last section, we compare the A-side and B-side multiplicity for toric CY GIT wall-crossing. There are examples at the end of each section, which might help to counter the heavy notations. 

\subsection{Acknowledgements}
We thank the referee for carefully reading our paper and providing valuable comments.

\section{The B-side}
In this section, we first setup the general toric GIT problem, then introduce the GKZ fan to relate various phases of GIT quotients (M-side). These data can be equally well encoded in certain triangulations problem (N-side). Next, we introduce the notion of a Coulomb-Higgs GIT problem, which represent a sub-quotient of the original GIT problem. See \cite{kite-segal, horja2022discriminants, GKZ-book} for more details. Then we prove our main lemma, namely the SOD multiplicity is invariant under passing to the Coulomb problem associated to a minimal face. Given the lemma, we then deduce a recursive formula for SOD multiplicities.

\subsection{Toric GIT setup}
Our starting point is a torus $(\C^*)^k$ acting on $V=\C^N$ with weights $q_1, \cdots, q_N \in \Z^k$. More invariantly, we have a rank $k$ lattice $L \cong \Z^k$, and $L_\CS = L \otimes_\Z \C^* \cong (\C^*)^k$ acts on $\C^N$ by factoring through $L_\CS \to (\C^*)^N$. This induces a map on the cocharacter lattices and character lattices 
%\footnote{We could have denoted the co-character lattice $X_*((\CS)^N) = \Hom(\C^*, (\CS)^N)$ by $\Z^N$, and the character lattice $X^*((\CS)^N)=\Hom((\CS)^N, \CS)$ by $\ZvN$, but to unclutter notation, we drop the dual sign and denote both lattices simply as $\Z^N$.}
$$ Q^\vee: L \to \Z^N, \quad Q: (\Z^N)^\vee \to \Lv. $$
We assume the $\coker(Q)$ is finite, or equivalently $Q^\vee$ is injective. 

We have a short exact sequence (SES), called 'N-sequence'
$$ 0 \to L \xto{Q^\vee} \Z^N \xto{A} \NN \to 0 $$
where $\NN$ might have torsion 
$$ 0 \to \NN_{tors} \to \NN \to \NN_{free} \to 0. $$
Let $\bar A: \Z^N \to \NN_{free}$ denote the obvious composition. 
Apply the $\Hom(-, \Z)$ to the 'N-sequence',  and use injective resolution $\Z \to \Q \to \Q / \Z$, then we have a long exact sequece,  the 'M-sequence'
$$ 0 \to \MM_0 \xto{A^\vee} (\Z^N)^\vee \xto{Q} \Lv \to \MM_1 \to 0, $$
where $\MM_0 = \Hom(\NN, \Z)=\Hom(\NN_{free}, \Z)$ is a lattice, and $\MM_1=\Ext^1(\NN, \Z) = \Hom(\NN_{tors}, \Q/\Z)$ is the Pontryagin dual to the torsion subgroup $\NN_{tors}$. 

If $G$ is an abelian group, we denote $( \cdots )_G:= ( \cdots ) \otimes_\Z G$, where $G$ can be $\R, \Z, \T = \R/\Z, \C, \C^*$. For example, if we apply $(\cdots)_\R$ to $Q: (\Z^N)^\vee \to \Lv$, then we will get $Q_\R: (\R^N)^\vee \to \Lv_\R$. 

Let the co-character lattice $X_*((\CS)^N) \cong \Z^N$ be equipped with the standard basis $e_1, \cdots, e_N$, and let the character lattice $X^*( (\CS)^N) \cong (\Z^N)^\vee$ be equipped with the dual basis $e_1^\vee, \cdots, e_N^\vee$. Define
$$ \wt a_i = A(e_i) \in \NN, \quad a_i = \bar A(e_i) \in \NN_{free},  \quad q_i = Q(e_i^\vee) \in \Lv. $$

% {\red do we need these combinatorics? Maybe not. }
% Let $[N] = \{1, \cdots, N\}$ and let $\ical := 2^{[N]} := \{I \mid I \In [N]\}$ denote the collection of subsets in $[N]$.   For any $0 \neq I \In [N]$, we define relative open cones
% $$ \sigma_I = \sum_{i \in I} \R_{>0} \cdot e_i \In \R^N, \quad  \tau_I = \sum_{i \in I} \R_{>0} \cdot e_i^\vee \In (\R^N)^\vee, $$
% and we let $\sigma_\emptyset = 0, \tau_\emptyset = 0$. 

% {\red using open cones are weird. maybe we should use strata? }
% By default, all cones and faces in this paper are relatively open cones, where a subset $S \In \R^N$ is relatively open if $S$ is open in the affine linear space generated by $S$. And we denote their closure with an overbar. 

\subsection{M-side, chambers and walls}
By the M-side, we mean the objects living on spaces in the M-sequence, for example about the map  $Q_\R: (\R^N)^\vee \to \Lv_\R$. Here for the simplicity of notation, we write $(\R^N)^\vee$ as $\R^N$. 

We recall the definition of GKZ fan (rather GKZ stratification) $\Sigma_{GKZ}(Q)$ in $\Lv_\R$ (aka GIT fan or secondary fan) for a toric GIT problem $Q$. We omit $(Q)$ if there is no danger of confusion.

Let $\mathscr P(S)$ denote the power set of a set $S$. By the GKZ stratification $\Sigma_{GKZ}(Q)$, we mean the stratification of $\Lv_\R$ given by the level sets of the following map:
\begin{eqnarray*}
L^\vee_\mathbb R & \to & \mathscr P (\{ \tau | \tau \text{ is a closed face of } (\mathbb R^N_{\geq 0})^\vee \} )\\
x & \mapsto & \{\tau | x\in Q_\mathbb R (\tau ) \}.
\end{eqnarray*}

% the following equivalence relation among points in $L^\vee_\mathbb R$:

% $$
% x\simeq x' \Leftrightarrow \{\tau \text{ closed face of } (\mathbb R^N_{\geq 0})^\vee | x\in Q_\mathbb R (\tau ) \}=\{\tau \text{ closed face of } (\mathbb R^N_{\geq 0})^\vee | x'\in Q_\mathbb R (\tau ) \}.
% $$
% generated by $Q_\R$ images of various faces of the positive quadrant $P_N = \R_{\geq 0}^N$. 
A top dimensional stratum is called a chamber, and a codimension-1 stratum is called a wall. 
The support of the GKZ fan is $Q_\R(\R_{\geq 0}^N)$. If the support is not the full $\Lv_\R$, then its complement is still a GKZ strata, and we call it the empty chamber.

Here, the positive quadrant $P_N=(\R_{\geq 0}^N)^\vee$ can be identified as the image of the moment map of $(\C^*)^N$ acting on $\C^N$
$$ \mu_N: \C^N \to \R^N, \quad (z_i)_i \mapsto (|z_i|^2)_i. $$
And the moment map of $L_\CS$ acting on $\C^N$ is $\mu = Q_\R \circ \mu_N$. 

For any $c \in \Lv_\R$, we define the GIT quotient stack
$$ X_c = [(\C^N)^{ss}_c / L_\CS],  \quad (\C^N)^{ss}_c = \{z \in \C^N \mid \overline{L_\CS \cdot z} \cap \mu^{-1}(c) \neq \emptyset \}.$$
More concretely, the positive quadrant $P_N$ is stratified by faces $\tau$, $P_N = \sqcup \tau$  which induces a stratification of $\C^N$ into strata $(\C^N)_\tau = \mu_N^{-1}(\tau)$, and $(\C^N)^{ss}_c = \sqcup \{(\C^N)_\tau \mid Q(\overline \tau) \cap c \neq \emptyset\}.$ From the latter description, it is clear that $X_c$ is constant when $c$ varies within a GKZ stratum $C$, hence we also write $X_c$ as $X_C$. For any GKZ stratum $C$, we call the GIT quotient $X_C$ a ``phase'' of the toric GIT problem. 

Let $\det V= \sum_i q_i \in \Lv$ denote the weight of $L_\CS$ acting on $\det V$. The toric GIT is called Calabi-Yau (CY), if $\det V=0$. 

Let $W$ be a wall separating two chambers $C_+, C_-$. We choose the $\pm$ sign so that on the wall $\det V$ is pointing towards $C_+$. Let $\lambda_W \in L$ be an integral primitive vector conormal to the wall. Let $d_W = |\la \lambda_W, \det V \ra |$.
If $d_W = 0$, i.e, $\det V$ is parallel to $W$,  we say there is a balanced wall-crossing and we have (non-canonical) derived equivalence $\Coh(X_{C_+}) \cong \Coh(X_{C_-})$. If $d_W > 0$, then we have semi-orthogonal decomposition
$$ \Coh(X_{C_+}) = \la \Coh(X_{C_-}), \Coh(Z_W), \cdots , \Coh(Z_W) \ra, $$
where $Z_W$ is defined in \eqref{eq:ZW}, and the factor $\Coh(Z_W)$ repeats $d_W$ times. 

%{\red We will abuse notation and let $X$ also denote $\Coh(X)$.}

Therefore, starting from any point $c \in \Lv_\R$, we may form the ray in the direction of $-\det V$. This is called the ``straight-line'' run in \cite{DKK}. If $c$ is generic, then the run will only encounter walls, and ends in a chamber $C$ that contains $-\det V$ in its closure. We call such a chamber $C$ and the corresponding phase $X_C$ minimal. It is possible that $X_C = \emptyset$ or that there are several minimal chambers, but upto derived equivalence, the minimal phase is unique. Let $X_{\min}(Q)$ denote the minimal phase (possibly empty) for toric GIT problem. 

\begin{remark}
Here are some side remarks on the relations between CY and non-CY toric GIT problems, which will not be used in the rest of the paper. 

For a non-CY problem, we can associate a CY problem to by adding a weight $q_{N+1} = -\det V$. The new problem is
$$ 0 \to L \xto{\hat Q} \Z^{N+1} \xto{\hat A} \NN \oplus \Z \to 0 $$
We have $\hat A(e_i) = (A(e_i), 1)$ for $i \in [N]$, and $\hat A(e_{N+1}) = (0, 1)$. 

Going the other way around, suppose we have a toric CY GIT problem, then pick any $q_i$ (which by assumption are all nonzero), say $q_{N+1}$ by relabelling,  we can delete it and get a toric non-CY GIT problem. 

Assuming $Q$ is a non-CY problem, then the toric CY GKZ stratification $\Sigma_{GKZ}(\hat Q)$ refines the non-CY GKZ stratification $\Sigma_{GKZ}(Q)$. The new walls in 
$\Sigma_{GKZ}(\hat Q)$ are all parallel to $\det V$. 
\end{remark}

% Later we will introduce a Higgs problem associated to the hyperplane generated by the wall, and define $Z_W$ as the phase of the toric variety.

% If tangent vector $-\det V$ is transverse to the wall $W$, we choose $C_\pm$ so that $-\det V$ points from $C_+$ to $C_-$.

%Cones in $\Sigma_{GKZ}$ are intersections of collections $Q_\R(\tau_I)$, where $\tau_I$ are faces of the closed positive quadrant $P=\R_{\geq 0}^N$. 

% Let $C$ be a chamber, we define a sub-collection $\ical_C$ of faces of $P$, 
% $$ \ical_C = \{ I \in \ical \mid \bar \tau_I \cap Q_\R^{-1}(C) \neq \emptyset \}. $$
% Consider the moment map for $(\C^*)^N$ acting on $\C^N$, $\mu: \C^N \to P$, $(z_i)_i \mapsto (|z_i|^2)_i$, then for each face $\tau_I$ of $P$, we define
% strata $(\C^N)_I = \mu^{-1}(\tau_I)$. Then the semi-stable loci of $\C^N$ for chamber $C$
% is 
% $$ (\C^N)^{ss}_C = \bigsqcup_{I \in \ical_C} (\C^N)_I.$$
% We may define the GIT quotient stack for this chamber as
% $$ X_C := [(\C^N)^{ss}_C / L_\CS]. $$
% $X_C$ is also called a {\bf phase} of the GIT problem. 
% The above definition also works for $C$ being any GKZ strata. 

%A chamber $C$ is a minimal chamber if it contains $-\det V$ in its closure; correspondingly a chamber $C$ is maximal if it contains $\det V$ in its closure. The minimal and maximal phases are well-defined up to derived equivalence. 

%If $\det V=0$, we say the toric GIT problem is Calabi-Yau (CY). 

\subsection{N-side, local triangulations and stacky fan}
By the N-side, we mean the objects living on spaces in the N-sequence, for example  the map  $A: \Z^N \to \NN$. Recall $a_i = \bar A(e_i) \in \NN_{free} \In \NN_\R$ and $\wt a_i = A(e_i) \in \NN$. 

\subsubsection{Localized Marked Polytope Subdivision}
Let $\Delta_N = \conv(0, e_1, \cdots, e_N)$ be the standard $N$-simplex in $\R^N$. Let $\Pi = A_\R(\Delta_N)$ be the image of $\Delta_N$ in $\NN_\R$, which is also the convex hull $\conv(0, a_1, \cdots, a_N)$. 

We start with a pair of piecewise linear (PL) functions on $\R^N$ and $(\R^N)^\vee$ related by Legendre transformation. Let 
$$ \varphi_N(x_1, \cdots, x_N) := \min(0, x_1, \cdots, x_N) :  (\R^N)^\vee \to \R. $$ 
It is a concave function, with $\varphi_N^{-1}(0) = P_N = (\R_{\geq 0})^N$. By Legendre transformation, we have
$$ \psi_N(y_1,\cdots,y_N) = \max_{x \in \R^N}(\varphi(x) - (x, y) ): \R^N \to \R. $$ 
$\psi_N$ is a convex function, and explicitly
$$ \psi_N(y)=\begin{cases}
0 & y \in \Delta_N \\
+\infty & y \notin \Delta_N
\end{cases} $$

Given any point $c \in \Lv_\R$, choose a lift $ b \in Q^{-1}(c)$. Then, we can identify $\MM_\R \cong Q^{-1}(c)$ by 
$$ f_{b}: \MM_\R \to Q^{-1}(c), \quad  \xi \mapsto b + A_\R^\vee(\xi) = (b_1 + (a_1, \xi), \cdots, b_N + (a_N, \xi)). $$  The restriction of $\varphi_N$ on $Q^{-1}(c)$ pullback to $\MM_\R$ by $f_{b}$ gives 
$$ \varphi_{b}(\xi) := \varphi_N(b + A_\R^\vee(\xi)) = \min(0, b_1 + a_1 (\xi), b_2 + a_2 (\xi), \cdots, b_N + a_N ( \xi)). $$
Its Legendre transformation is
$$ \psi_{b}(\eta) = \max_{\xi \in \MM_\R}(\varphi_{b}(\xi )- (\eta , \xi) ). $$

{\bf Notation:} For uniform treatment later, it is useful to introduce another pair $(a_{N+1}, b_{N+1}) = 0 \in \NN_\R \times \R$ for the origin. 

The overgraph of $\psi_b$
$$ \Gamma_{\geq  \psi_{b}} := \{(x,y) \in  \Pi \times \R \mid y \geq \psi_{b}(x) \} $$ 
is the convex hull of the upward rays 
$$ \bigcup_{i=1}^{N+1}  a_i \times \{ y \geq  b_i\}  \In \NN_\R \times \R.$$ 
We record the subset $S \In [N+1]$ where the tip of the ray touches the graph, i.e. 
$$ S = \{ i \in [N+1] \mid \psi_b(a_i) = b_i \}. $$ 
For $x \in \Pi$, if $x = a_i$ for some $i \in S$, we say {\bf $x$ is marked by $i$}. It is possible $x$ is marked by more than one $i$. 

The maximal linearity domain of $\psi_b$ defines a polytope subdivision of $\Pi$, and the vertices of the polytopes and possibly some interior points are marked by some $i \in S$. 

The polytope subdivision together with marking points $S$ is called a coherent (multi-)marked polytope subdivision, and marked subdivision for short. 

If we choose a different lift $b \in Q^{-1}(c)$, then $\psi_b$ only change by a linear function and does not affect the coherent subdivision. For a generic choice of $c$, we have triangulations of $\Pi$ with no interior marked points, and each vertex of the simplices is marked exactly once. 

%We say a marked subdivision uses $a_i$ if $a_i$ (rather $i$) appears in the marking. 

\begin{definition}
Let $x$ be any point in $\Pi$, we say two marked subdivisions are {\bf equivalent at $x$} if they have the same collection of marked polytopes that contain $x$. 

A {\bf  marked subdivision localized at $x$} is an equivalence class of marked subdivisions modulo equivalence relations at $x$.  
\end{definition}

For any $c \in \Lv_\R$, we may use the above procedure to obtain a marked subdivision $T(c)$ of $\Pi$. 

%{\red Explain $\Sigma_{GKZ}$ to referee then the proof should be fine}

\begin{proposition}
\begin{enumerate}
    \item For any $c \in \Lv_\R$, the subdivision $T(c)$ has a marked point at the origin if and only if $c$ is not in the empty chamber of $\Sigma_{GKZ}(Q)$. 
    \item For any $c_1, c_2 \in \Lv_\R$,  $c_1$ and $c_2$ are in the same GKZ stratum of $\Sigma_{GKZ}(Q)$ if and only if $T(c_1)$ is equivalent to $T(c_2)$ at the origin. 
\end{enumerate}
\end{proposition}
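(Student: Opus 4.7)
The plan is to translate both statements into conditions on the fiber polytope
\[ P(b) := f_b^{-1}(P_N) = \{\xi \in \MM_\R : b_i + a_i(\xi) \geq 0 \text{ for all } i \in [N]\}, \]
which equals $\{\varphi_b = 0\}$ since $\varphi_b \leq 0$ everywhere. Replacing $b$ by $b + A_\R^\vee(\xi_0)$ shifts $\varphi_b$ by an affine function and changes neither $P(b)$ nor $T(b)$, so both depend only on $c = Q(b)$, and we may argue at the level of $c \in \Lv_\R$.

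For part (1), the condition $N+1 \in S$ is $\psi_b(0) = 0$. From the definition $\psi_b(0) = \max_\xi \varphi_b(\xi)$ and the inequality $\varphi_b \leq 0$, equality holds iff some $\xi$ achieves $\varphi_b(\xi) = 0$, iff $P(b) \neq \emptyset$, iff $c \in Q_\R(P_N)$, iff $c$ is not in the empty chamber.

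For part (2), the main ingredient is the classical Legendre duality between the ``tropical'' subdivision of $\MM_\R$ (whose cells are the argmin regions of $\varphi_b$) and the marked subdivision $T(b)$ of $\Pi$ (whose cells are the maximal linearity domains of $\psi_b$). Under this duality, a marked cell $\sigma_I = \conv\{a_i : i \in I\}$ of $T(b)$ with $N+1 \in I$ is dual to the closure of the argmin-$I$ region in $\MM_\R$; since $N+1 \in I$ forces $\varphi_b \equiv 0$ on that region, it is precisely the relative interior of the face
\[ \{\xi \in P(b) : b_i + a_i(\xi) = 0 \text{ for all } i \in I \setminus \{N+1\}\} \]
of $P(b)$. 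Hence the star of $a_{N+1}$ in $T(b)$ canonically recovers the face poset of $P(b)$ (labeled by vanishing index sets). Independently, the GKZ stratum of $c$ is by construction determined by the data $\{J \subset [N] : c \in Q_\R(\operatorname{relint}(\tau_J))\}$, where $\tau_J = \{x \in P_N : x_j = 0 \text{ for } j \in J\}$; translating through $f_b$, this set is exactly the collection of exact vanishing sets of relative interiors of faces of $P(b)$, which is another description of the same face poset. Thus $b_1$ and $b_2$ share a GKZ stratum iff $T(b_1), T(b_2)$ are equivalent at $a_{N+1}$.

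The only nontrivial step is the Legendre duality identifying star-of-$a_{N+1}$ cells of $T(b)$ with faces of $P(b)$. This is classical (cf.\ the correspondence between regular subdivisions and tropical hypersurfaces in \cite{GKZ-book}), but some care is required: one must treat $a_{N+1}$ as a distinguished marked point and account for cells with possible interior markings not involving $a_{N+1}$. These latter are exactly what the equivalence relation at $a_{N+1}$ discards, which is ultimately why localization at $a_{N+1}$ recovers the GKZ stratification rather than a finer one.
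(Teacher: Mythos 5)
Your argument is correct and follows the same route as the paper's proof: both identify the fiber $Q_\R^{-1}(c)\cap P_N$ with the polytope $P(b)\subset\MM_\R$ via $f_b$, and match the faces of $P(b)$ (equivalently, which faces $\tau_J$ of $P_N$ the fiber meets, and with what vanishing set) with the cells of $T(b)$ whose marking contains $a_{N+1}$. You spell out the Legendre duality between $\varphi_b$ and $\psi_b$ more explicitly than the paper's terse one-paragraph argument, but the underlying correspondence is the same.
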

\begin{proof}
 For (1), $T(c)$ has a marked point at the origin if and only if $\psi_b(0)=0$, which is equivalent to $\max_{\xi \in M_\R} \varphi_b(\xi) = 0$, and is equivalent to $Q_\R^{-1}(c) \cap \R_{\geq 0}^N \neq \emptyset$. 
 
 For (2), $c_1$ and $c_2$ are in the same GKZ stratum, if and only if the fibers $Q_\R^{-1}(c_1)$ and $Q_\R^{-1}(c_2)$ intersect each face $\tau$ of $\R_{\geq 0}^N$ in the same way. Each non-empty intersection of $\tau \cap Q_\R^{-1}(c)$ corresponds to a polytope (possibly not of top dimension) in $T(c)$ containing $0$. 
\end{proof}

\begin{remark}
If the GIT problem $Q$ is Calabi-Yau, then any marked subdivision of $\Pi$ uses the origin and the equivalence relation is trivial. In this case, we have a bijection between GKZ chambers and marked triangulation of $\Pi$. 

If the GIT problem $Q$ is not CY, then we may consider the associated CY problem $\hat Q$. Then chambers in $\Sigma_{GKZ}(\hat Q)$ corresponds to marked triangulation of of $\Pi$; and chambers in $\Sigma_{GKZ}(Q)$ corresponds to marked triangulations of $\Pi$ localized at $0$. 
\end{remark}

\begin{remark}
Since the cones in the toric fan are precisely the cones on the (marked) polytopes that contain the origin, polytopes that do not contain the origin do not affect the toric data for that phase.
\end{remark}

\begin{remark}
    We remind the reader that, in comparison with \cite{kite-segal} in the CY case, our polytope $\Pi$ is not the same $\Pi$ that appears in \cite{kite-segal}, but is the cone on it from the origin. Furthermore, since all faces in our marked subdivision include the origin, these faces of our $\Pi$ are in bijection with the faces of $\Pi$ as in \cite{kite-segal}.
\end{remark}

\subsubsection{Stacky Fan and its Volume}\label{s:volume-of-fan}
Let $C$ be a chamber of GKZ fan, and let $T(C)$ be the corresponding marked triangulation localized at the base point. This defines a stacky fan in $\NN$ as follows. 

Recall the definition of simplicial stacky fan following \cite{borisov2005orbifold}. Let $\NN$ be a finitely generated abelian group, $\Sigma$ be a rational simplicial polyhedran fan in $\NN_\R$, and $\{v_i\}_{i \in \Sigma^1} \In \NN$ such that $(v_i)_\R$ generate the corresponding ray in $\Sigma$. The triple $\bSigma = (\NN, \Sigma, \{v_i\})$ is called a stacky fan. 

Roughly speaking, modulo torsion, we can think of a stacky fan $\bSigma$ as a collection of simplices $\bsigma \In \NN_\R$ with shared vertices $(v_i)_\R$. 

We write $|\Sigma|$ for the union of the cones $\sigma$, and we write $|\bSigma|$ for the union of the simplices $\bsigma$. 

We normalize volume $\vol_{\R}$ on $\NN_\R$ such that a minimal simplex with vertices in $\NN_{free}$ has unit volume. Then we define the stacky volume by
$$ \vol(\mathbf \Sigma) := |\NN_{tors}| \cdot \vol_\R (|\bSigma|). $$

By design of the volume, we have the following result
\begin{proposition} [\cite{horja2022discriminants}]
The volume of the stacky fan equals the rank of the $K_0$ group of 
$ X_\bSigma$, 
$$\rank(K_0(X_\bSigma)) = \vol(\bSigma). $$
\end{proposition}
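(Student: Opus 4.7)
The plan is to match both sides cone by cone, using additivity that is manifest on the right and a consequence of standard decompositions on the left. By construction, $\vol(\bSigma) = |\NN_{tors}| \sum_{\sigma \in \Sigma_{\max}} \vol_\R(\bsigma)$. On the other side, for a smooth toric DM stack I appeal to the decomposition
\begin{equation*}
\rank K_0(X_\bSigma) = \sum_{\sigma \in \Sigma_{\max}} \rank K_0(X_{\bsigma}),
\end{equation*}
where $X_{\bsigma}$ is the affine open substack attached to $\sigma$. In the complete case this is the combinatorial shadow of orbifold/equivariant localization: $\rank K_0 \otimes \Q$ equals the topological Euler characteristic of the inertia stack $IX_\bSigma$, whose $T$-fixed points are parametrized by pairs $(\sigma, g)$ with $\sigma \in \Sigma_{\max}$ and $g$ running over the local isotropy group $G_\sigma$ at the unique $T$-fixed point of $X_{\bsigma}$.

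For each maximal simplicial cone $\sigma$ with ray generators $\{v_i\}_{i \in \sigma(1)}$, the Borisov-Chen-Smith construction gives $X_{\bsigma} = [\C^n / G_\sigma]$ with $n = \rank \NN_{free}$ and $G_\sigma$ a finite abelian group of order $|\coker \beta_\sigma|$, where $\beta_\sigma : \Z^{\sigma(1)} \to \NN$ sends $e_i \mapsto v_i$. Since $G_\sigma$ acts linearly, $K_0(X_{\bsigma}) \cong R(G_\sigma)$ has rank $|G_\sigma|$. Applying the snake lemma to the pair of short exact sequences given by $\beta_\sigma$ mapping into the split extension $0 \to \NN_{tors} \to \NN \to \NN_{free} \to 0$, and using injectivity of the induced map $\bar\beta_\sigma : \Z^{\sigma(1)} \to \NN_{free}$ (maximality of $\sigma$), yields
\begin{equation*}
0 \to \NN_{tors} \to \coker \beta_\sigma \to \NN_{free}/\Z\la \bar v_i \ra \to 0,
\end{equation*}
so $|G_\sigma| = |\NN_{tors}| \cdot [\NN_{free} : \Z\la \bar v_i\ra]$. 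By the chosen volume normalization, this lattice index equals $\vol_\R(\bsigma)$, and summing over $\sigma \in \Sigma_{\max}$ concludes the proof.

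The main obstacle is justifying the $K_0$-decomposition over maximal cones when $\bSigma$ is not complete; if $|\bSigma| \subsetneq \NN_\R$ but still has full dimension at every maximal cone, one can either complete $\bSigma$ to a larger stacky fan without disturbing any existing $X_{\bsigma}$ for $\sigma \in \Sigma_{\max}$, or argue by Mayer-Vietoris over the affine cover and check that the boundary strata (coming from non-maximal cones) contribute trivially at the level of ranks. A secondary technical point is justifying $|G_\sigma| = |\coker \beta_\sigma|$ when $\NN$ has torsion, which I would handle via Pontryagin duality for finite abelian groups in the Gale-dual construction of $G_\sigma$.
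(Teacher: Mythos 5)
The paper does not prove this proposition; it is quoted from Horja--Katzarkov \cite{horja2022discriminants}, so there is no in-paper argument to compare against. I will therefore assess your proof on its own terms.

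Your single-cone computation is correct: the snake lemma applied to $\beta_\sigma : \Z^{\sigma(1)} \to \NN$ mapping into $0 \to \NN_{tors} \to \NN \to \NN_{free} \to 0$ does give $|G_\sigma| = |\coker \beta_\sigma| = |\NN_{tors}| \cdot [\NN_{free} : \Z\la \bar v_i\ra]$, and the lattice index is the normalized volume of $\bsigma$, so $\rank K_0(X_{\bsigma}) = |\NN_{tors}|\,\vol_\R(\bsigma)$.

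The genuine gap is exactly where you flagged it, but neither of your proposed fixes closes it. The additivity
\begin{equation*}
\rank K_0(X_\bSigma) \;=\; \sum_{\sigma \in \Sigma_{\max}} \rank K_0(X_{\bsigma})
\end{equation*}
is \emph{false} for an arbitrary simplicial fan all of whose maximal cones are full-dimensional. Take $\NN = \Z^2$ and $\Sigma$ consisting of the two cones $\cone(e_1,e_2)$ and $\cone(-e_1,-e_2)$ together with their faces. This is a pure, smooth, simplicial fan with two maximal cones, so the right-hand side is $2$. But the toric variety $X_\Sigma = \A^2 \cup_{(\C^*)^2} \A^2$ has $A^0 = \Z$, $A^1 = \Z^2$ (four boundary divisors modulo two character relations), and $A^2 = 0$ (the two $T$-fixed points are rationally trivial because their ambient divisors are affine lines), so $\rank K_0(X_\Sigma) = 3 \neq 2$. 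This also shows "complete the fan and argue the new charts contribute separately" is circular: the $K_0$ of the complement of $X_\Sigma$ in any completion is precisely the quantity whose additivity you would need, and Mayer--Vietoris only gives long exact sequences whose boundary maps you would still have to control.

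What saves the proposition in the present context is a structural feature of the stacky fans that arise from GKZ chambers, which your proof never invokes: their support is convex. Indeed, $\bSigma$ consists of the cones over the simplices of a triangulation of $\Pi = \conv(0, a_1, \dots, a_N)$ that contain the base point $0$, i.e.\ the closed star of $0$. Because the star of $0$ is a full neighborhood of $0$ in $\Pi$, the union of the cones is the tangent cone $\cone_0(\Pi) = \cone(a_1, \dots, a_N)$, which is a convex polyhedral cone (in the CY case, the cone over $\conv(a_i)$; in general possibly all of $\NN_\R$). My counterexample above has support equal to the union of two opposite quadrants, which is not convex and cannot occur as the star of a vertex in a triangulation of a convex polytope. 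With convex support one can run a Bialynicki-Birula/attracting-cell (equivalently, a shelling) argument to decompose $K_0(X_\bSigma)$ as a direct sum over maximal cones, or one can invoke the explicit Stanley--Reisner-type presentation of $K_0$ for smooth toric DM stacks due to Borisov--Horja, from which the rank formula is read off; either route is substantively more than Mayer--Vietoris plus "check boundary strata contribute trivially". I would not call your outline a proof until this convexity is identified and the additivity is actually established under that hypothesis.
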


\subsubsection{Minimal phases and Minimal Fans} \label{s:volume-of-ZF}
Given a toric GIT problem $A: \Z^N \onto \NN$ or $Q: (\Z^N)^\vee \to \Lv$, on the $M$-side, we have some minimal chambers $C$, i.e. those containing $-\det(V)$. On the N-side, we have some minimal stacky fans corresponding to the minimal chambers. They have the same support $|\bSigma^{min}|$ which we describe now.

Let $S = A_\R(\Z_{\geq 0}^N)$ and $S_+ = A_\R(\Z_{\geq 0}^N \RM \{0\})$ and define (slightly abusing notation)
$$ |\bSigma^{min}| = \conv(S) - \conv(S_+). $$
We see $|\bSigma^{min}| = \emptyset$ if and only if $S = S_+$, or $0 \in \Pi=\conv(a_i)$, or the support of the GKZ fan is not the whole $\Lv_\R$. 

\begin{proposition} [\cite{horja2022discriminants}]
We have the rank-volume relation
$$ \rank(K_0(X_{min})) = \vol(|\bSigma^{min}|) = |\NN_{tors}| \cdot \vol_\R(\conv(S) - \conv(S_+)).$$
\end{proposition}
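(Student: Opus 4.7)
The plan is to reduce to the preceding rank-volume proposition for a stacky fan, applied to any minimal phase, and then to verify the geometric identification $|\bSigma^{min}| = \conv(S) - \conv(S_+)$. The argument has three natural parts.

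First, I would pick any minimal chamber $C$ of the GKZ fan, i.e.\ one whose closure contains $-\det V$. If no such chamber exists then $X_{min}=\emptyset$, and on the right-hand side one has $\conv(S)=\conv(S_+)$ (the origin is never reached as a ``minimal'' region), so both sides vanish. Otherwise, the dictionary of Section 2.3 assigns to $C$ a marked triangulation of $\Pi$ and hence a simplicial stacky fan $\bSigma_C$ in $\NN$ with $X_C \cong X_{\bSigma_C}$. The preceding proposition gives $\rank K_0(X_C) = \vol(\bSigma_C) = |\NN_{tors}|\cdot\vol_\R(|\bSigma_C|)$, so the whole task reduces to showing that $|\bSigma_C|$ is independent of the choice of minimal $C$ and equals $\conv(S) - \conv(S_+)$.

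Second, for independence: any two minimal chambers are connected by a chain of balanced wall-crossings, because $-\det V$ lies in the closure of every minimal chamber, forcing $\la \lambda_W, \det V \ra = 0$ across any wall between them. On the N-side these correspond to bistellar flips that rearrange simplices within a fixed polyhedral region, so $|\bSigma_C| = |\bSigma^{min}|$ is well-defined. For the geometric identification, the key observation is that a chamber $C$ is minimal precisely when the associated triangulation of $\Pi$ is star-shaped from the origin, i.e.\ every maximal simplex has $0$ as a vertex (this is the N-side translation of the straight-line run in direction $-\det V$ terminating in $C$). Consequently, the cones of $\bSigma_C$ cover the part of $\conv(S)=\R_{\geq 0}\la a_1,\ldots,a_N \ra$ adjacent to the origin, while $\conv(S_+)$ -- the convex hull of nonzero nonnegative integer combinations of the $a_i$'s -- fills the complementary region away from $0$.

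The main obstacle is this final geometric identification, especially in the non-CY setting. I would treat it by passing to the associated CY problem $\hat Q$ with extra weight $q_{N+1} = -\det V$, where all $\hat a_i$ lie on the hyperplane $\{y=1\}$ so that the analogous difference $\conv(\hat S) - \conv(\hat S_+)$ is transparently the pyramid $\hat \Pi = \conv(0,\hat a_1,\ldots,\hat a_{N+1})$. The minimal chambers of $Q$ correspond to a specific subfamily of chambers of $\hat Q$ (those coming from triangulations of $\hat \Pi$ that project to star-from-origin triangulations of $\Pi$), and tracking the projection $\NN\oplus\Z \to \NN$ of the supports then yields the claimed set-difference description in the original $\NN_\R$, after which the identity follows from $\vol(\bSigma)=|\NN_{tors}|\cdot\vol_\R(|\bSigma|)$.
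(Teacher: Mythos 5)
The paper itself gives no proof of this proposition; it is quoted directly from \cite{horja2022discriminants}, so there is no internal argument to compare against. Evaluating your sketch on its own terms, the overall scaffolding (reduce to the preceding $\rank K_0(X_\bSigma)=\vol(\bSigma)$ identity, show independence of the choice of minimal chamber via balanced wall-crossings, then identify $|\bSigma^{min}|$ with $\conv(S)-\conv(S_+)$) is the right shape, but the central geometric step is incorrect.

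The claim that ``$C$ is minimal precisely when the associated triangulation of $\Pi$ is star-shaped from the origin, i.e.\ every maximal simplex has $0$ as a vertex'' is backwards. If every maximal simplex contained $0$, the localized support would be $|\bSigma_C|=\Pi$ in its entirety --- the \emph{largest} possible support, not the smallest. But the straight-line run in the direction $-\det V$ strictly \emph{decreases} $\rank K_0$ at each unbalanced wall (each crossing strips off SOD factors $\Coh(Z_W)$), so the minimal chamber has the \emph{smallest} $|\bSigma_C|$ among all chambers. A concrete counterexample: $N=2$, $k=1$, $Q=(1,-2)$, so $a_1=2,\ a_2=1$ in $\NN=\Z$, $\Pi=[0,2]$, $-\det V=1$. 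The minimal chamber is $(0,\infty)$, whose triangulation of $\Pi$ is $[0,1]\cup[1,2]$ --- not star-shaped from $0$ --- and the localized support is $[0,1]$, matching $\conv(S)-\conv(S_+)=[0,1)$. The non-minimal chamber $(-\infty,0)$ has triangulation $[0,2]$, which \emph{is} trivially star-shaped, with the larger support $[0,2]$. So your ``Consequently'' is a non-sequitur: star-shapedness would give $|\bSigma_C|=\Pi$, which is strictly bigger than $\conv(S)-\conv(S_+)$ whenever $Q$ is not CY. The actual content --- that at the minimal chamber the cells of the localized triangulation containing $a_{N+1}=0$ tile exactly the ``cap'' $\{x\in\cone(a_i):\phi(x)<1\}$ where $\phi(x)=\sup\{\sum t_i : x=\sum t_i a_i,\ t_i\ge 0\}$ --- still needs an argument; it is not a formal consequence of minimality as you have framed it. The CY-ification passage has the same issue: $|\hat\bSigma_{\hat C}|$ is always the full pyramid $\hat\Pi$ (a cone over $\Pi$), while $|\bSigma_C|$ is a proper subset of $\Pi$ in general, so tracking the projection does not immediately produce the truncated region $\conv(S)-\conv(S_+)$; one must still characterize which cells survive localization at $a_{N+1}$, and this is where the heart of the lemma lies.
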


\subsection{Coulomb and Higgs GIT problems \label{ss:Coulomb}}

We follow \cite{kite-segal, aspinwall2017mirror} and introduce two GIT problems called Coulomb and Higgs problems respectively. 

\subsubsection{Definition using subsets of $[N]$}
Fix any subset $\Ga \In [N]$. We may consider the subspace $\C^\Ga \In \C^N$, and consider the subtorus in $L\otimes \C^*$ that preserves  $\C^\Ga$. The following commutative diagrams might be useful for book-keeping. 

\[
\begin{tikzcd}
& 0 \ar[d] & 0 \ar[d] & 0 \ar[d] & \\
0 \ar[r]  & L_\Ga \ar[r, "Q_\Ga^\vee"]\ar[d] & \Z^\Ga \ar[r, "A_\Ga"]\ar[d] &  \NN_\Ga \ar[r]\ar[d] & 0 \\
0 \ar[r]  & L \ar[r, "Q^\vee"]\ar[d] & \Z^N \ar[r, "A"]\ar[d] &  \NN \ar[r]\ar[d] & 0 \\
0 \ar[r]  & L/L_\Ga \ar[r, "Q_{/\Ga}^\vee"]\ar[d] & \Z^N  / \Z^\Ga \ar[r, "A_{/\Ga}"]\ar[d] &  \NN/\NN_\Ga \ar[r]\ar[d] & 0 \\
  & 0    & 0   & 0 &
\end{tikzcd}
\]

Dualize the first two columns, and then apply the snake lemma, we get
\[
\begin{tikzcd}
& 0 \ar[d] & 0 \ar[d] & 0 \ar[d] & ... \ar[d] &  \\
0 \ar[r]  &  (\NN/\NN_\Ga)^\vee_0 \ar[r, "A_{/\Ga}"]\ar[d] & (\Z^N  / \Z^\Ga)^\vee \ar[r,"Q_\qGa"]\ar[d] &  ( L/L_\Ga )^\vee\ar[r]\ar[d] & (\NN/\NN_\Ga)^\vee_1 \ar[r]  \ar[d] & 0 \\
0 \ar[r]  &  \MM_0 \ar[r, "A^\vee"]\ar[d] & (\Z^N)^\vee \ar[r,"Q"]\ar[d] &  \Lv \ar[r]\ar[d] & \MM_1 \ar[r]  \ar[d] & 0 \\
0 \ar[r]  &  (\MM_\Ga)_0 \ar[r,"A_\Ga^\vee"]\ar[d] & (\Z^\Ga)^\vee \ar[r,"Q_\Ga"]\ar[d] &  \Lv_\Ga \ar[r]\ar[d] & (\MM_\Ga)_1 \ar[r] \ar[d] & 0 \\
  & ...    & 0   & 0 & 0 & 
\end{tikzcd}
\]

\begin{definition} \cite[Section 2.3]{kite-segal}\label{def: Coulomb-Higgs}
Let $\Ga \In [N]$. We call the GIT problem associated to $Q_\Ga$ (or $A_\Ga$)
as the {\bf Coulomb problem} for $\Ga$, and the GIT problem associated to $Q_{/\Ga}$ and $A_{/\Ga}$ as the {\bf Higgs problem} for $\Ga$. 

More generally, if we have $\Ga_1 \In \Ga_2 \In [N]$, we may form the GIT problem 
$$   L_{\Ga_2} / L_{\Ga_1} \xto{Q^\vee_{\Ga_2 / \Ga_1}} \Z^{\Ga_2} / \Z^{\Ga_1} \xto{A_{\Ga_2/\Ga_1}} \NN_{\Ga_2} / \NN_{\Ga_1} $$ as the Coulomb-Higgs problem for the pair $\Ga_1 \In \Ga_2$.
\end{definition}

The Coulomb problem for $\Ga$ is the subtorus $L_\Ga \otimes \C^*$ acting on the subspace $\C^\Gamma$. The Higgs problem for $\Ga$ is the quotient torus $L_{\C^*} / (L_\Ga)_{\C^*}$ acting on the subspace $\C^{\Gamma^c}$ that is fixed by $(L_\Ga)_{\C^*}$.

\subsubsection{Definition using subspaces and faces}
Although the Coulomb-Higgs problems can be defined for general subsets $\Ga_1 \In \Ga_2 \In [N]$, they often arise from subspaces $H \In \Lv_\R$ and faces $F \In \Pi$.

\begin{definition}
A subspace $H \In \Lv_\R$ is called {\bf weight supported} if $H = \span_\R\{q_i \in H \}$. 

A subset $F \In \Pi$ is called a {\bf face} if there is a linear function $l: \NN_\R \to \R$, such that $l|_\Pi \geq 0$ and $F=l^{-1}(0) \cap \Pi$.
\end{definition}

Recall $[N]_H = \{i: q_i \in H\}$ and $[N]_F = \{i: a_i \in F\}$.

\begin{definition}
If $H_1 \In H_2 \In \Lv_\R$ are a pair of weight supported subspaces, we define the Coulomb-Higgs problem $Q_{H_2/H_1}$ 
$$ Q_{H_2/H_1}: \Z^{[N]_{H_2} - [N]_{H_1}} \to (\Lv \cap H_2)/H_1. $$
If $H_1 =0$, we get a Higgs problem $Q_{H_2}$; if $H_2=\Lv_\R$, we get a Coulomb problem $Q_{/H_1}$. 

%Here in the M-side, taking subspace $H_2$ corresponds to passing to Higgs problem for subset $[N]_{H_2}^c$, and taking quotient $/H_1$ corresponds to passing to Coulomb problem for subset $[N]_{H_1}^c$. 

If $F_1 \In F_2 \In \Pi$ is a pair of faces, we define the Coulomb-Higgs problem $A_{F_2/F_1}$
$$ A_{F_2/F_1}: \Z^{[N]_{F_2} - [N]_{F_1}} \to \NN_{F_2} / \NN_{F_1}, \quad \NN_{F} := \NN_{\Ga = [N]_F}.  $$ 
If $F_1 = 0$, we get a Coulomb problem $A_{F_2}$; if $F_2=\Pi$, we get a Higgs problem $A_{/F_1}$. 
\end{definition}

%Here in the M-side, taking face $F_2$ corresponds to passing to Coulomb problem for subset $[N]_{F_2}$, and taking quotient $/F_1$ corresponds to passing to Higgs problem for subset $[N]_{F_1}$. 

%Consider the M-side. Let $H$ be a subspace spanned by $q_i$ contained in it (we call such subspace {\bf weight supported}),  then we call the Coulomb/Higgs problem associated to $\Ga = [N]_H^c$ as the Coulomb/Higgs problem associated to $H$. Concretely, the Higgs GIT problem is $Q_H : \Z^{[N]_H} \to \Lv \cap H$, and the Coulomb GIT problem is $Q_{/H}: \Z^{[N]_H^c} \to \Lv / H$. 

%On the M-side, the GKZ stratification $\Sigma_{GKZ}(Q_H)$ is a coarsening of the intersection $\Sigma_{GKZ}(Q) \cap H$.

%If $H_1 \subset H_2$ are two weight supported subspaces, then we may define the Coulomb-Higgs problem

\subsubsection{Minimal faces and relevant subspaces}
Here we again follow \cite{kite-segal, aspinwall2017mirror} and introduce certain important Coulomb Higgs problem, that describes the SOD components. 

\begin{definition}
Let $S$ be a subset of $[N]$. Recall that $a_i = A_\R(e_i)$, $q_i = Q_\R(e_i^\vee)$. 
\begin{enumerate}
    \item We say $S$ is {\bf $A_\R$-redundant}, if there exists $c_i \neq 0$ for each $i \in S$, such that $\sum_{i \in S} c_i a_i = 0$. 
    \item We say $S$ is {\bf $A_\R$-saturated}, if there is a linear function $l: \NN_\R \to \R$, such that $S = \{i: l(a_i)=0\}$. 
    \item We say $S$ is {\bf $A_\R$-extremally-saturated}, if there is a linear function $l: \NN_\R \to \R$, such that $S = \{i: l(a_i)=0\}$, and $l(a_i) > 0$ for all $i \in S^c$. 
    \item We say $S$ is {\bf $Q_\R$-redundant}, if there exists $c_i \neq 0$ for each $i \in S$, such that $\sum_{i \in S} c_i q_i = 0$. 
    \item We say $S$ is {\bf $Q_\R$-saturated}, if there is a linear function $l: \Lv_\R \to \R$, such that $S = \{i: l(q_i)=0\}$. 
    \item We say $S$ is {\bf $Q_\R$-positively-redundant} if there exists $c_i > 0$ for $i \in S$, such that $\sum_{i \in S} c_i q_i = 0 $. 
\end{enumerate} 
\end{definition}

\begin{proposition}
Let $S$ be a subset of $[N]$.
\begin{enumerate}
    \item $S$ is $A_\R$-redundant if and only if $S^c$ is $Q_\R$-saturated. 
    \item $S$ is $Q_\R$-redundant if and only if $S^c$ is $A_\R$-saturated. 
    \item $S$ is $Q_\R$-positively redundant if and only if $S^c$ is $A_\R$-extremally-saturated. 
\end{enumerate}
\end{proposition}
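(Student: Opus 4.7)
The plan is to prove all three equivalences uniformly by translating the six conditions into statements about vectors of prescribed support in the kernels and images of the four real maps $A_\R$, $Q_\R$, $A^\vee_\R$, $Q^\vee_\R$, and then invoking exactness of the N- and M-sequences after tensoring with $\R$. Since torsion dies after $\otimes_\Z \R$, we have
\[
\Im Q^\vee_\R = \ker A_\R \subset \R^N, \qquad \Im A^\vee_\R = \ker Q_\R \subset (\R^N)^\vee \cong \R^N.
\]
Each of the six properties appearing in the proposition will boil down to the existence of a particular vector in one of these subspaces with support equal to $S$.

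For (1), ``$S$ is $A_\R$-redundant'' unpacks as: there exists $c \in \R^N$ with $c_i \neq 0$ for $i \in S$ and (implicitly, since the defining relation $\sum_{i \in S} c_i a_i = 0$ runs only over $S$) $c_i = 0$ for $i \in S^c$, such that $A_\R(c) = 0$; that is, a $c \in \ker A_\R$ of support exactly $S$. Similarly, ``$S^c$ is $Q_\R$-saturated'' asks for $l \in L_\R$ whose image $Q^\vee_\R(l) \in \R^N$ has $i$-th coordinate $l(q_i)$ vanishing on $S^c$ and nonzero on $S$ --- an element of $\Im Q^\vee_\R$ of support exactly $S$. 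Since $\ker A_\R = \Im Q^\vee_\R$, the two conditions coincide. Part (2) is completely parallel after swapping the roles of the two short exact sequences: both sides amount to the existence of an element of $\ker Q_\R = \Im A^\vee_\R$ of support exactly $S$.

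For (3) the same translation applies to ``$S$ is $Q_\R$-positively-redundant'', but now with the strengthened sign condition $c_i > 0$ for $i \in S$ and $c_i = 0$ for $i \in S^c$. By exactness we write $c = A^\vee_\R(l)$ for a unique $l \in \MM_{0,\R} = \NN^\vee_\R$; its coordinates are $l(a_i) = c_i$, so the sign data transfers verbatim: $l(a_i) > 0$ for $i \in S$ and $l(a_i) = 0$ for $i \in S^c$, which in particular forces $l(a_i) \geq 0$ for every $i \in [N]$ --- precisely the $A_\R$-extremally-saturated condition for $S^c$. Conversely, from such an $l$ the vector $A^\vee_\R(l) \in \ker Q_\R$ exhibits $S$ as $Q_\R$-positively-redundant.

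The proof is entirely a bookkeeping exercise and I do not foresee a serious obstacle. The only point requiring mild care is verifying that in each of the six conditions the relevant vector has support \emph{exactly} $S$ rather than merely support contained in $S$ --- in the redundancy conditions this is guaranteed by the summation index ranging only over $S$, and in the saturation conditions by the set-theoretic equality $S$ (or $S^c$) $= \{i : l(\cdot) = 0\}$. No Farkas-type separation argument is needed; part (3) reduces to the observation that $A^\vee_\R: \NN^\vee_\R \to \R^N$ sends $l$ to the coordinate vector $(l(a_i))_i$, so signs are preserved coordinate-by-coordinate under the identification $\ker Q_\R = \Im A^\vee_\R$.
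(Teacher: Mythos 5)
Your proof is correct and follows essentially the same route as the paper's: both translate redundancy and saturation into the existence of a vector of support exactly $S$ in $\ker A_\R = \Im Q^\vee_\R$ (respectively $\ker Q_\R = \Im A^\vee_\R$), using exactness of the N- and M-sequences after $\otimes_\Z \R$, and observe that the sign condition in (3) transfers coordinatewise. Your version is somewhat more explicit than the paper's terse argument, but the underlying idea is identical.
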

\begin{proof}
% We only prove the first statement, as the second follows verbatim by exchanging $Q$ and $A$. 

For (1), by definition, $S$ is $A_\R$-redundant, if there exists $c_i \neq 0$ for each $i \in S$, such that $\sum_{i \in S} c_i a_i = 0$. By setting $c_i=0$ for $i \notin S$, we get an element $\vec c = (c_i) \in \R^N$, such that $A(\vec c)=0$, i.e, $\vec c \in L$. Hence $\vec c$ defines a linear function $l: L^\vee_\R \to \R$, such that $l(q_i)=0$ if and only if $c_i=0$, i.e $i \in S^c$. Thus $S^c$ is $Q_\R$-saturated. The argument can also be reversed, hence we get the equivalence. 

For (2), we only need to change the above argument by swapping $A_\R$ with $Q_\R$, and  $a_i$ with $q_i$. 

For (3), we can check that the two positivity conditions match. 
\end{proof}

Recall $\Pi = A_\R(\Delta^N) \In \NN_\R$. A subset $F \In \Pi$ is called a face is there is a linear function $l: \NN_\R \to \R$, such that $l(\Pi) \geq 0$ and $F=l^{-1}(0)$. For example, $\Pi$ is always a face of itself. And if $\Pi$ contains a linear subspace $H$, then any face $F \supset H$.

\begin{definition} \label{d:minface-relsub}
Let $F$ is a face of $\Pi$ and define $[N]_F = \{ i: a_i \in F\}$. By construction $[N]_F$ is $A_\R$-extremally-saturated. 
We say $F$ is a {\bf minimal face} of $\Pi$ if $[N]_F$ is $A_\R$-redundant. 

Let $H \In \Lv_\R$ be a weight supported subspace, and define $[N]_H = \{i: q_i \in H\}$. We say $H$ is a {\bf relevant subspace} of $\Lv_\R$ if $[N]_H$ is $Q_\R$-positively-redundant and $Q_\R$-saturated. 
\end{definition}

\begin{remark}
Our definitions generalize the definitions of minimal faces and relevant subspaces in \cite{kite-segal} in the toric CY setting.
\end{remark}

We obtain the following slight generalization of Proposition 4.15 in \cite{kite-segal}. 
\begin{proposition}
There is a bijection between minimal faces of $\Pi$ and relevant subspaces of $\Lv_\R$, such that if a minimal face $F$ corresponds to a relevant subspace $H$ then $[N]_F = [N]_H^c$.
\end{proposition}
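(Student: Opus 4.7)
The plan is to build the bijection by setting $[N]_F = [N]_H^c$ and showing both directions using the equivalences of the previous proposition (the dictionary between $A_\R$-redundant/$A_\R$-extremally-saturated and $Q_\R$-saturated/$Q_\R$-positively-redundant), which does essentially all of the combinatorial work. The two sides are dual to each other, so the heart of the argument is just to produce the correct geometric data (a face on one side, a weight-supported subspace on the other) and verify that the bookkeeping on indices agrees.

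First, starting from a minimal face $F$ of $\Pi$, the set $S := [N]_F$ is by definition $A_\R$-extremally-saturated and (since $F$ is minimal) $A_\R$-redundant. By the previous proposition, $S^c$ is then simultaneously $Q_\R$-saturated and $Q_\R$-positively-redundant. I will define
\[
H := \spann_\R\{q_i : i \in S^c\} \In \Lv_\R.
\]
By construction $H$ is weight supported in the sense of Definition~\ref{d:minface-relsub}, provided $[N]_H = S^c$; and this equality is forced by $Q_\R$-saturation, since there exists $l : \Lv_\R \to \R$ with $\{i : l(q_i) = 0\} = S^c$, so $H \In \ker l$ and thus $q_i \in H$ already implies $i \in S^c$. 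The two redundancy/saturation properties of $S^c = [N]_H$ then read exactly as saying $H$ is relevant.

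Conversely, starting from a relevant subspace $H$, the set $T := [N]_H$ is $Q_\R$-saturated and $Q_\R$-positively-redundant. By the proposition, $T^c$ is $A_\R$-extremally-saturated and $A_\R$-redundant; I will pick the linear functional $l : \NN_\R \to \R$ that witnesses extremal saturation (so $l \geq 0$ on all $a_i$ and $l(a_i) = 0$ iff $i \in T^c$) and set
\[
F := \{x \in \Pi : l(x) = 0\}.
\]
Since $l \geq 0$ on every $a_i$ and $\Pi = \conv(0, a_1, \ldots, a_N)$, we have $l \geq 0$ on $\Pi$, so $F$ is a face. Then $[N]_F = \{i : l(a_i) = 0\} = T^c$, and $A_\R$-redundancy of $T^c$ says exactly that $F$ is minimal.

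Finally, I will check the two assignments are mutually inverse. Both constructions force $[N]_F = [N]_H^c$, so composed they preserve the index set on the minimal-face side and on the relevant-subspace side. For a minimal face $F$, its image under $F \mapsto H \mapsto F'$ must satisfy $[N]_{F'} = [N]_F$; since a face of $\Pi$ is determined by the collection $\{a_i : i \in [N]_F\}$ (the face is the convex hull of $0$ together with these $a_i$, or equivalently $\conv\{a_i : i \in [N]_F\}$ when $0 \notin F$, cut out as the zero locus of the appropriate $l$), this yields $F' = F$. For a relevant subspace $H$, weight-supportedness gives $H = \spann\{q_i : i \in [N]_H\}$, which is exactly the formula we will obtain when we run $H \mapsto F \mapsto H'$. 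The only mild subtlety, and the step I expect to be the most delicate, is this last point: reconstructing $F$ from $[N]_F$ and reconstructing $H$ from $[N]_H$ unambiguously. Once one unwinds the definitions this is immediate, but it is the place where the geometric and combinatorial descriptions must be carefully matched.
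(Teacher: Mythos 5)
Your proof is correct and follows essentially the same route as the paper's: pass through the previous proposition to transfer saturation/redundancy between the $A_\R$ and $Q_\R$ sides, define $H$ as the span of the $q_i$ with $i \in [N]_F^c$ (resp.\ $F$ as the zero locus of the witnessing functional), and observe that $[N]_F$ and $[N]_H$ determine $F$ and $H$ respectively. The paper compresses the converse direction and the mutual-inverse check to ``going backward is similar,'' whereas you spell them out; the content is the same.
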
 
\begin{proof}
Let $F$ be a minimal face, then $[N]_F$ is $A_\R$ extremally-saturated and redundant, hence $[N]_F^c$ is $Q_\R$-positively-redundant and saturated. Let $H = \span \{q_i: i \in [N]_F^c\}$, we then get a relevant subspace. Going backward is similar. 
\end{proof}

If $F$ is a minimal face (of $\Pi$), we will abuse notation and also call $[N]_F$ a minimal face.  Similarly, if $H$ is a relevant subspace, we will also call $[N]_H$ is a relevant subspace.

\subsubsection{GIT quotients}
Here we define some GIT quotients associated with the Higgs and Coulomb problems.

\begin{definition} \label{d:Z-variety}
If $H$ is a relevant subspace, and $F$ is the corresponding minimal face, with $\Ga=[N]_F$, then we use $Z_H = Z_{/F} = Z_{/\Ga}$ to denote the minimal phase in the Higgs problem $Q_{H}$ and $A_{/F}$. 

In general, for any Coulomb-Higgs GIT problem $Q_{H_1/H_2}$ (resp.  $A_{F_1/F_2}$), we use $Z_{H_1/H_2}$ (resp. $Z_{F_1 / F_2}$) to denote the minimal phase in that problem. 

If $W$ is a wall in $\Sigma_{GKZ}(Q)$, let $H = \span_\R(W)$, then we define $Z_W$ as the phase $W$ of the Higgs problem $Q_H$. 
\end{definition}

The following result and proof is essentially also due to \cite{kite-segal}. 
\begin{proposition}
Let $F$ be a face of $\Pi$, and $A_F$, $Q_F$ be the Coulomb problem associated to $F$. Then we have map of lattices
$$ \pi_F: \Lv \to \Lv_{F}. $$
The map $(\pi_F)_\R$ is compatible with the GKZ stratifications $\Sigma_{GKZ}(Q)$ and $\Sigma_{GKZ}(Q_F)$, i.e. image of a strata is a strata. 
\end{proposition}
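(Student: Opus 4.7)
Setting $\Ga = [N]_F$, I would proceed in three steps. First, I would construct $\pi_F$ by dualizing the inclusion $L_\Ga \hookrightarrow L$ that appears in the leftmost column of the diagram in Definition~\ref{def: Coulomb-Higgs}. Since $L_\Ga = L \cap \Z^\Ga$ and $\Z^\Ga$ is saturated in $\Z^N$, the sublattice $L_\Ga$ is saturated in $L$, so $L/L_\Ga$ is free and dualization produces the desired surjective lattice map $\pi_F : \Lv \onto \Lv_\Ga$.

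Second, I would read off from the dualized diagram just below Definition~\ref{def: Coulomb-Higgs} the commutative square
\[
\begin{tikzcd}
(\Z^N)^\vee \arrow[r,"Q"] \arrow[d,"p"'] & \Lv \arrow[d,"\pi_F"] \\
(\Z^\Ga)^\vee \arrow[r,"Q_F"'] & \Lv_\Ga
\end{tikzcd}
\]
where $p$ is the coordinate projection dual to the inclusion $\Z^\Ga \hookrightarrow \Z^N$. Tensoring with $\R$ gives $\pi_{F,\R} \circ Q_\R = Q_{F,\R} \circ p_\R$.

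Third, for stratification compatibility, I would use the description of $\Sigma_{GKZ}(Q)$ (resp.\ $\Sigma_{GKZ}(Q_F)$) as the stratification generated by the images $Q_\R(\tau)$ for faces $\tau \In P_N$ (resp.\ $Q_{F,\R}(\tau')$ for faces $\tau' \In P_\Ga := \R_{\geq 0}^\Ga$). The projection $p_\R$ sends each face $\tau_I$ of $P_N$ to the face $\tau_{I \cap \Ga}^\Ga$ of $P_\Ga$, and every face of $P_\Ga$ arises this way by extension-by-zero; combined with Step~2 this yields $\pi_{F,\R}(Q_\R(\tau)) = Q_{F,\R}(p_\R(\tau))$. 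For any face $\tau' = \tau_{\Ga'}^\Ga$ of $P_\Ga$ with maximal preimage face $\tilde\tau = \tau_{\Ga^c \cup \Ga'} \In P_N$, a direct computation (using extension-by-zero lifts of representatives of $\tau'$) then yields
\[
\pi_{F,\R}^{-1}\bigl(Q_{F,\R}(\tau')\bigr) = Q_\R(\tilde\tau) + \ker(\pi_{F,\R}),
\]
and a snake-lemma chase on the dualized diagram identifies $\ker(\pi_{F,\R}) = \spann_\R(q_i : i \notin \Ga)$. I would then conclude that each such preimage is a union of $\Sigma_{GKZ}(Q)$-strata, so $c_1, c_2$ in a common stratum of $\Sigma_{GKZ}(Q)$ have images $\pi_{F,\R}(c_1), \pi_{F,\R}(c_2)$ with identical membership in every $Q_{F,\R}(\tau')$, hence lying in a common stratum of $\Sigma_{GKZ}(Q_F)$.

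The delicate point, and the main obstacle I would have to work through, is the final \emph{union of strata} assertion: it amounts to saying that translation by $\spann_\R(q_i : i \notin \Ga)$ preserves $\Sigma_{GKZ}(Q)$-strata. I would resolve it by using that $\Sigma_{GKZ}(Q)$ is the coarsest stratification resolving all the cones $Q_\R(\tau)$, together with the fact that the subspace in question is itself a difference of such cones, so translations along it cannot cut through a stratum.
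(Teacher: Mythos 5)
Your proof proposal takes a genuinely different route from the paper's: you work on the M-side, using the description of $\Sigma_{GKZ}$ as the stratification generated by the cones $Q_\R(\tau)$, whereas the paper works on the N-side, observing that the piecewise-linear function $\psi_b$ restricts on the face $F$ to the function $\psi_{b_F}$ (so that both $\wt\pi_F$ and a chosen affine section $\iota_F$ carry strata into strata).

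There is, however, a genuine error in your resolution of the ``delicate point.''\ The claim that translation by $H=\spann_\R(q_i:i\notin\Ga)$ preserves $\Sigma_{GKZ}(Q)$-strata is false. For a concrete counterexample, take $Q:\Z^5\to\Z^3$ with $q_1=(1,0,0)$, $q_2=(0,1,0)$, $q_3=(-1,-1,0)$, $q_4=(0,0,1)$, $q_5=(0,0,-1)$, and let $F$ be a face with $\Ga=[N]_F=\{1,2,3\}$, so $\Ga^c=\{4,5\}$ and $H$ is the $z$-axis. The generating cone $Q_\R(\tau_{\{1,4\}})=\cone(q_1,q_4)=\{(a,0,b):a,b\geq0\}$ separates the points $(1,0,1)$ and $(1,0,-1)$, which differ by $(0,0,-2)\in H$; so $H$-translation does move points between strata. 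Fortunately, the weaker assertion you actually need is true and much simpler than what you reached for: because $F$ is a face, $[N]_F$ is $A_\R$-extremally-saturated, hence $\Ga^c=[N]_F^c$ is $Q_\R$-positively-redundant, and therefore $\cone(q_i:i\in\Ga^c)=\spann_\R(q_i:i\in\Ga^c)=\ker(\pi_{F,\R})$. Since $\Ga^c\subseteq\Ga^c\cup\Ga'$, this kernel is already contained in $Q_\R(\tilde\tau)$, so your identity collapses to $\pi_{F,\R}^{-1}(Q_{F,\R}(\tau'))=Q_\R(\tilde\tau)$, a generating cone of $\Sigma_{GKZ}(Q)$ and hence tautologically a union of strata --- no translation-invariance is needed. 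Finally, note that even with this repaired, your argument only establishes that the image of a stratum is \emph{contained in} a stratum of $\Sigma_{GKZ}(Q_F)$; the paper's statement is that the image \emph{is} a stratum, and the paper handles the converse inclusion by exhibiting an affine section $\iota_F$ of the projection and showing it too sends strata into strata. You would need to supply an analogous converse (for instance, by using the extension-by-zero lift $\iota_F$ and running your cone computation in reverse).
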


\begin{proof}
Let $\wt \Sigma_{GKZ}(Q)$ be the pullback stratification of $\R^N$, and $\wt \Sigma_{GKZ}(Q_F)$ be that of $\R^{[N]_F}$. Suffice to show that under the quotient map $\wt \pi_F: \R^N \to \R^{[N]_F}$, the image of a strata is a strata. 

Let $b \in \R^N$, and $b_F = \pi_F(b) \in \R^{[N]_F}$. Then $b$ defines a PL convex function $\psi_b$ on $\Pi$, and $b_F$ defines a PL convex function $\psi_{b_F}$ on $F$. It is easy to check $\psi_{b_F} = \psi_b|_F$. Hence when $b$ varies within a strata, ie, induced localized marked subdivision remains invariant, the boundary $F$ subdivision also remains invariant. Thus $\pi_F$ sends a strata inside a strata. 

On the other hand, consider the lift (a section of $\pi_F$)
$$ \iota_F: \R^{[N]_F} \to \R^{N} , \quad x \mapsto (x-b_F)+b. $$
We claim that $\iota_F$ also sends a strata into a strata. Indeed, 
as we vary $b_F$ such that the localized marked subdivision of $F$ remains invariant, then we can extend the variation of $b_F$ to that of $b$ by keeping other $b_i$s not in the face $F$ fixed. The resulting localized subdivision of $\Pi$ remains fixed. 
\end{proof}

Given the above result, if $C$ is a chamber corresponding to the and $F$ is a face, then we have the a well-defined chamber $C_F$ in the Coulomb problem $Q_F$. If $C$ corresponds to a stacky fan $\bSigma$, then we use $\bSigma_F$ for $C_F$. Concretely,  $\bSigma_F$ is the stacky fan $(\NN_\Ga, \Sigma \cap (\NN_\Ga)_\R, S \cap \Ga)$.  We restrict the simplices $\bsigma$ to the face $F$ and coarsen $\NN \cap \R F$ to sub-lattice $\NN_\Ga$. %We denote $Z_{F/F}$ to denote the Coulomb-Higgs problem of $A_{F/F}$. 

% \begin{remark}
% The above result would fail if we do not insist on a Coulomb problem associated to a face $F$ but to an arbitrary $\Ga$. For example, consider $q_1=(1,1), q_2(-1,1), q_3=(0,-1), q_4=(0,-1)$, and take the subspace $H = \span_\R q_3$. Then $[N]_H=\{3,4\}$ is $Q$-saturated, redundant (see the next subsection for the definition.), but not positively redundant, meaning $\Ga=[N]_H^c=\{1,2\}$ is $A$-saturated, redundant, but not extremally saturated, i.e $\Ga$ is not a face. The map $\pi_\Ga: \R^2 \to \R, (x,y) \mapsto x$.   We then see the GKZ strata $\cone(q_1, q_2)$ is sent to the entire $\R$, which is a union of 3 stratas $(-\infty, 0), \{0\}, (0,\infty)$. 
% \end{remark}

%Consider the N-side, if we have a face $F$ of $\Pi$, then we may consider the Coulomb problem $A_F: \Z^{[N]_F} \to \NN_F$ corresponding to $\Ga=[N]_F$, and the Higgs problem $A_{/F}: \Z^{[N]_F^c \to \NN / \NN_F}$. 

%If we have two faces $F_1 \In F_2$ of $\Pi$, we can form the Coulomb-Higgs problem $A_{F_2/F_1}: \Z^{[N]_{F_2} - [N]_{F_1}} \to \NN_{F_2} / \NN_{F_1}$. Let $Z_{/F}$ denote the minimal phase in the Higgs problem $A_{/F}$. Similarly for $Z_{F_2/F_1}$.

\subsection{Main Lemma}
First we recall some results from \cite{kite-segal}. 

\begin{proposition}[\cite{kite-segal}]
The SOD components in an ultimate SOD in a toric GIT problem $Q$ are labelled by the set of relevant subspaces, or equivalently, the set of minimal faces. Let $X_C$ be any phase and $H \In \Lv_\R$ any relevant subspaces, then the multiplicity $n^B_{C,H} = [X_C: Z_H]$, as the number of times that $\Coh(Z_H)$ appears in a complete SOD of $\Coh(X_C)$ arising from toric GIT wall-crossing, is well-defined.  
\end{proposition}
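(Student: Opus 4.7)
The plan is to establish the labelling and well-definedness of the multiplicity simultaneously by iterated wall-crossing, inducting on the rank $k$ of the acting torus. First I would fix a generic basepoint $c \in C$ and run the straight line from $c$ in the direction of $-\det V$, recording every wall crossed. At each wall $W_i$ separating adjacent chambers $C_i$ and $C_{i+1}$, the window subcategory construction of \cite{halpern2015derived, BFK, segal2011equivalences} yields
\begin{equation*}
\Coh(X_{C_i}) = \la \Coh(X_{C_{i+1}}),\ \Coh(Z_{W_i})^{\oplus d_{W_i}} \ra.
\end{equation*}
The run terminates either in the empty chamber or in a minimal chamber whose associated phase, by Section \ref{s:volume-of-ZF}, is already of the form $\Coh(Z_H)$ for a suitable relevant subspace $H$ (for example $H = \Lv_\R$ in the CY case).

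Next I would iterate, using the fact that each intermediate factor $\Coh(Z_{W_i})$ is, by Definition \ref{d:Z-variety}, the phase $W_i$ of a Coulomb-Higgs GIT problem with weight lattice $H_i := \spann_\R W_i$, of strictly smaller rank. By the inductive hypothesis, each $\Coh(Z_{W_i})$ decomposes further into factors $\Coh(Z_H)$ indexed by relevant subspaces of the sub-quotient problem; these lift canonically to relevant subspaces of $Q$ contained in $H_i$. Concatenating all such SODs produces the desired decomposition of $\Coh(X_C)$ indexed by relevant subspaces of $\Lv_\R$. That every relevant subspace $H$ actually appears can be shown by tailoring the straight-line run so as to cross a wall supported on $H$; that no further splitting is possible follows because each $Z_H$ is a \emph{minimal} phase of a sub-quotient problem, where the straight-line run has trivial length and the window SOD produces only one factor.

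The main obstacle I anticipate is proving that the multiplicity $n^B_{C,H}$ is independent of the path of the straight-line run and of the choices of window shifts at each wall. The plan is to pass to the Grothendieck group, where any complete SOD of the above form gives $[X_C] = \sum_H n^B_{C,H}\,[Z_H]$, and then to prove linear independence of the classes $\{[Z_H]\}_{H \text{ relevant}}$ in an appropriate ambient $K_0$. In the Calabi-Yau setting this is precisely what Kite-Segal carry out in \cite{kite-segal}; for the general toric GIT statement, I would couple linear independence to the rank-volume identification of Section \ref{s:volume-of-ZF}, so that each $Z_H$ is detected by a numerically distinct volume contribution. Once linear independence is established, $n^B_{C,H}$ is uniquely determined by $[X_C]$, and the Jordan-H\"older property for SOD components follows.
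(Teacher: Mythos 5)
This proposition is quoted in the paper directly from \cite{kite-segal} with no proof supplied, so there is no internal argument to compare against; what follows is an assessment of your sketch on its own terms.

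Your wall-crossing recursion is a sensible way to \emph{produce} a complete SOD of $\Coh(X_C)$ into factors of the form $\Coh(Z_H)$, and it is in the spirit of the straight-line runs of \cite{DKK} and the window SODs of \cite{halpern2015derived, BFK}. But two essential points are not actually established. First, you assert that each $\Coh(Z_H)$ is irreducible because the straight-line run from the minimal chamber has trivial length. That only shows that \emph{this particular mechanism} produces no further factors; it does not show that $\Coh(Z_H)$ admits no nontrivial SOD at all, which is what ``irreducible SOD component'' requires. One needs a separate indecomposability argument (in \cite{kite-segal} this rests on the structure of the minimal Higgs phase and on nonvanishing results for its Hochschild homology / window categories), and your sketch leaves this open.

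Second, and more seriously, the well-definedness of $n^B_{C,H}$ is the genuine Jordan--H\"older statement, and the $K_0$ detour does not close it. Even if the classes $\{[Z_H]\}$ were linearly independent in some ambient $K_0$ (which you do not prove --- rank/volume is a single scalar invariant and cannot separate distinct $H$ with the same volume), this would only pin down the multiplicities \emph{within the family of SODs whose factors are all of the form $\Coh(Z_H)$}. It says nothing about an arbitrary complete SOD of $\Coh(X_C)$ into indecomposables, which a priori could have entirely different factors. The content of the Kite--Segal theorem is precisely that any such SOD is related to the wall-crossing one by mutations, so that the multiset of factors is an invariant; that mutation argument is the missing idea. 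As written, your proposal shows existence of a decomposition by $Z_H$'s, but neither irreducibility of the factors nor uniqueness of the multiplicities.
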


Our main lemma equate the multiplicity $[X_C: Z_H]$ to its counterpart $[X_{C/H}: Z_{H/H}]$ in the Coulomb problem $Q_{/H}$. 

Recall we have a quotient map $\pi_H: \Lv_\R \to \Lv_\R/H$. 
As Kite-Segal observed, the quotient $\pi_H$ is compatible with fans $\Sigma_{GKZ}(Q)$ and $\Sigma_{GKZ}(Q_{/H})$. Indeed, a strata in $\Sigma_{GKZ}(Q)$ represent a localized marked subdivision, and passing to $\Sigma_{GKZ}(Q_{/H})$ means we intersect the subdivision with the face $F$ corresponding to $H$, hence is still a localized marked subdivision. 

If $C$ is a chamber in $\Sigma_{GKZ}(Q)$, let $C/H$ denote the corresponding chamber in $\Sigma_{GKZ}(Q_{/H})$. Different chambers $C$ can result in the same quotient chamber $C/H$. 

Let $X_{C/H}$ denote the phase of $C/H$ in Coulomb problem $Q_{/H}$, and let $Z_{H/H}$ denote the minimal phase in the Coulomb-Higgs problem $Q_{H/H}$, which actually is a point. 

\begin{lemma}\label{l:B-pullback}
Let $C$ be a non-empty GKZ chamber and $H$ a relevant subspace, then 
$$ [X_C: Z_H] = [X_{C/H}: Z_{H/H}].$$
\end{lemma}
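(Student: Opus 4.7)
The plan is to proceed by induction on the distance from $C$ to the minimal chamber in $\Sigma_{GKZ}(Q)$, using wall-crossing SODs and the compatibility of $\pi_H$ with GKZ stratifications established just above. By the Jordan--H\"older property of \cite{kite-segal}, $[X_C : Z_H]$ is well-defined, so I may freely choose any convenient complete SOD to perform the computation.

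Starting from the minimal chamber, I would repeatedly cross walls to reach $C$. For each wall crossing from $C_-$ to $C_+$ across a wall $W$, the window-shift SOD from \cite{halpern2016autoequivalences} gives the recursion
\begin{equation*}
[X_{C_+} : Z_H] = [X_{C_-} : Z_H] + d_W \cdot [Z_W : Z_H],
\end{equation*}
and an analogous recursion holds in the Coulomb problem $Q_{/H}$ after projecting $W$ to $W/H$:
\begin{equation*}
[X_{C_+/H} : Z_{H/H}] = [X_{C_-/H} : Z_{H/H}] + d_{W/H} \cdot [Z_{W/H} : Z_{H/H}].
\end{equation*}
Matching these term by term: the base case at the minimal chamber can be verified directly. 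The key non-trivial step is the identification of wall contributions $d_W [Z_W : Z_H] = d_{W/H} [Z_{W/H} : Z_{H/H}]$, which I would extract from two inputs: first, applying the same lemma inductively to the Higgs problem $Q_{\spann(W)}$ in which $Z_W$ lives (yielding $[Z_W : Z_H] = [Z_{W/H} : Z_{H/H}]$ in the lower-dimensional setting); second, a direct pairing comparison showing $d_W = d_{W/H}$, which uses that $H$ is a relevant (in particular, weight-supported) subspace so that the projection of $\det V$ to $\Lv/H$ pairs consistently with $\lambda_W$ modulo $L_H$.

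The main obstacle will be walls $W$ parallel to or contained in $H$, where $W/H$ is not a wall of $\Sigma_{GKZ}(Q_{/H})$ and the recursions appear mismatched. For such walls the contribution $d_W \cdot [Z_W : Z_H]$ must vanish. Geometrically, when $W \subseteq H$, the factor $Z_W$ sits in a Higgs problem whose data transverse to $H$ is unchanged from the original, so the minimal face corresponding to $H$ cannot appear as an irreducible SOD component in $\Coh(Z_W)$, forcing $[Z_W : Z_H] = 0$. Verifying this cleanly requires unpacking how relevant subspaces restrict under passage to a wall, using the bijection between minimal faces and relevant subspaces established just above. Once this degenerate case is handled and the recursions are matched step-by-step along a chamber path from $C_{\min}$ to $C$, the induction closes.
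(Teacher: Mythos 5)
Your proposal takes essentially the same route as the paper: compute the multiplicity by running a wall-crossing sequence from the minimal chamber to $C$, match each wall contribution upstairs with the corresponding contribution in the Coulomb problem $Q_{/H}$, and handle the factors $[Z_W : Z_H]$ by applying the lemma inductively to the Higgs problem $Q_{\spann_\R W}$. The paper does exactly this, with the step $d_W = d_{W/H}$ coming from $\pi_H^*(\lambda_{W/H}) = \lambda_W$.

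Two points need tightening. First, the induction variable: ``distance from $C$ to the minimal chamber'' does not support the recursive application to the Higgs problem $Q_{\spann_\R W}$, since that is a different GKZ fan with its own chamber structure. The paper inducts on $r = \codim_\R H$, which strictly decreases when you pass to the Higgs problem $Q_{\spann_\R W}$ (because $H$ then lives inside the hyperplane $\spann_\R W$), and this cleanly closes the induction with base case $r = 0$, $H = \Lv_\R$, $Z_H = X_{\min}$, both multiplicities equal to $1$. Second, the characterization of the degenerate walls is inverted. The walls $W$ whose contribution must vanish are those with $H \not\subset \spann_\R(W)$, i.e.\ exactly those that project to the \emph{interior} of a chamber in $\Lv_\R/H$ rather than to a wall; for such $W$, $H$ is not a subspace of $\spann_\R W$ and hence not a relevant subspace of the Higgs problem $Q_{\spann_\R W}$, so $Z_H$ is not among the irreducible SOD factors of $\Coh(Z_W)$ and $[Z_W : Z_H] = 0$. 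Your phrase ``walls $W$ parallel to or contained in $H$'' describes roughly the complementary set; for $\codim_\R H > 1$ a codimension-one wall $W$ cannot even be contained in $H$. The intuition you give (``$H$ cannot appear as an irreducible SOD component of $\Coh(Z_W)$'') is the right reason, just attached to the wrong class of walls; once corrected, the recursion matching works exactly as you sketch.
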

\begin{proof}
We induct on $r = \codim_\R H$. If $r=0$, $H = \Lv_\R$. Since by assumption $H$ is relevant, $Z_H=X_{min} \neq \emptyset$. We have
$ [X_C: Z_H]= [ X_C: X_{min}] = 1.$ And we also have 
 $X_{C/H}  = pt$, hence $[X_{C/H}: Z_{H/H}] = 1$. 

Suppose the statement holds when $r < n$, and consider the case $r=n$. For any chamber $C$, consider ``monotone decreasing run'' $C=C_0 \leadsto C_1 \leadsto  \cdots \leadsto C_m$, where $C_m$ is a minimal phase, and let $W_i$ denote the wall separating $C_{i-1}$ and $C_i$. For any wall $W$, write $W\parallel H$ if $\span_{\R}(W)\supset H$.

For any wall $W$ with $W \not\parallel H$, we have $[Z_W: Z_H]=0$. Thus
$$ [X_C: Z_H] = \sum_{W} d_{W} [Z_{W}: Z_H] =  \sum_{W  \parallel H} d_{W} [Z_{W}: Z_H], $$
where summation of $W$ runs over all the wall crossing in the run i.e. $W=W_i$ for some $i$.

For $W$ with $W \parallel H$,  $[Z_{W}: Z_H]$ is computed in the Higgs problem $Q_{\span_\R W}$. By our induction hypothesis, we have $[Z_{W}: Z_H] = [Z_{W/H}, Z_{H/H}]$.  Hence 
$$ [X_C: Z_H] = \sum_{W \parallel H} d_{W} [Z_{W/H}: Z_{H/H}]. $$

On the other hand, the sequence of ``upstair chambers'' in $L_\R^\vee$ is sent to a sequence of ``downstair chambers'' in $L_\R^\vee/H$ (possibly with repetitions). In particular, if $W \not \parallel H$, then the two chambers separated by $W$ are sent to the same chamber in $\Lv_\R/H$. Hence the downstairs wall crossing corresponds to those upstairs wall crossing with $W \parallel H$. We get
$$ [X_{C/H}: Z_{H/H}] = \sum_{W \parallel H} d_{W/H} [Z_{W/H}: Z_{H/H}].   $$
Hence we only need to prove $d_W = d_{W/H}$, and this follows from  $\pi_H^*(\lambda_{W/H}) = \lambda_W$ (up to sign ambiguity of $\lambda_W$). 
\end{proof}

Since a GKZ chamber $C$ corresponds to a coherent stacky fan $\bSigma$, and a relevant subspace $H$ corresponds to a minimal face $F$, we may also denote $X_\bSigma = X_C$ and $Z_{/F} = Z_H$, and denote $n^B_{\bSigma, /F} = n^B_{C,H}$. Let $\Ga = [N]_F$ denote set of $a_i$ in the minimal face. Then, the above statement is 
$$ [X_{\bSigma}: Z_{/F}] = [X_{\bSigma \cap F}: Z_{F/F}] $$

%\section{Transition volume and intersection multiplicity}

\subsection{Recursive Formula for Multiplicity} \label{ss: recursive}
Let $\fcal$ be the set of minimal faces indexing the SOD components. For a variety (or smooth DM stack) $Y$, we use notation
$$ K_0(Y)= K_0(\Coh(Y)) \otimes_\Z \Q. $$
For $F$ a minimal face, we have $$\rank(K_0(Z_{/F})) = \vol(\bSigma^{min}(A_{/F})) = i(\Ga) u(\Ga),$$ 
where $i(\Ga)$ and $u(\Ga)$ are define in \cite{GKZ-book} or \cite{horja2022discriminants}. (See also Section \ref{s:volume-of-ZF}). 

\begin{proposition}\label{p:recursive}
Let $C$ be a GKZ chamber, and $\bSigma$ the corresponding stacky fan. For each minimal face $F \in \fcal$, let $n_F:=[X_\bSigma: Z_{/F}]$ denote the SOD multiplicity of $\Coh(Z_{/F})$ in $\Coh(X_\bSigma)$. Then we have a system of linear equations labelled also by minimal faces in $\fcal$, 
\begin{equation}
\rank(K_0(X_{\bSigma \cap F})) = \sum_{F' \leq F} n_{F'} \rank(K_0(Z_{F/F'})). \label{e:recursive-rank}
\end{equation}
where equation the labelled by $F$ only involves $n_{F'}$ with face $F' \leq F$, and $\rank([Z_{F/F}])=1$. 
\end{proposition}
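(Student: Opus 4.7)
The plan is to combine Lemma~\ref{l:B-pullback} with additivity of $\rank K_0$ under semi-orthogonal decompositions, applied inside the Coulomb problem $A_F$.

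First, I would apply Kite-Segal's classification of irreducible SOD components (recalled immediately before Lemma~\ref{l:B-pullback}) to the phase $X_{\bSigma \cap F}$ viewed inside the Coulomb GIT problem $A_F$. The ambient polytope for $A_F$ is $F$, and a short check shows that the minimal faces of $F$ in the sense of Definition~\ref{d:minface-relsub} are exactly the minimal faces $F' \leq F$ of $\Pi$: a face of $F$ is automatically a face of $\Pi$, and the $A_\R$-redundancy condition depends only on the vertex set. Hence $\Coh(X_{\bSigma \cap F})$ admits an SOD whose irreducible components are $\Coh(Z_{F/F'})$ for $F' \leq F$, with well-defined multiplicities $m_{F'} := [X_{\bSigma \cap F}: Z_{F/F'}]$ (taken inside $A_F$). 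Additivity of $\rank K_0$ under SOD then gives
$$ \rank([X_{\bSigma \cap F}]) = \sum_{F' \leq F} m_{F'}\, \rank([Z_{F/F'}]). $$

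Second, I would identify $m_{F'}$ with $n_{F'}$ by invoking Lemma~\ref{l:B-pullback} in two different ambient problems. Applied in the original problem to the relevant subspace corresponding to $F'$, it yields $n_{F'} = [X_{\bSigma \cap F'}: Z_{F'/F'}]$. Applied inside the Coulomb problem $A_F$ to the relevant subspace of $A_F$ corresponding to $F'$, it yields $m_{F'} = [X_{(\bSigma \cap F) \cap F'}: Z_{F'/F'}]$. Because $F' \leq F$ forces $(\bSigma \cap F) \cap F' = \bSigma \cap F'$, the two quantities coincide, so $m_{F'} = n_{F'}$ and equation~\eqref{e:recursive-rank} follows. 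The triangular structure and normalization are then immediate: only $F' \leq F$ contribute since $Z_{F/F'}$ is only defined in that range, and $A_{F/F}$ is the trivial zero-dimensional problem, so $Z_{F/F} = \mathrm{pt}$ has rank one.

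The main subtlety I expect is purely bookkeeping: verifying carefully that ``minimal face of $F$ inside the Coulomb problem $A_F$'' matches ``minimal face $F' \leq F$ of $\Pi$,'' and that the corresponding SOD component $Z_{F/F'}$ of $A_F$ truly agrees with the iterated Coulomb-Higgs minimal phase from Definition~\ref{d:Z-variety} (and in particular that the chamber $\bSigma \cap F$ of $A_F$ is a valid non-empty chamber to which Lemma~\ref{l:B-pullback} applies). Once these identifications are set up cleanly, the rest is a mechanical combination of the main lemma with rank additivity, with no further enumerative input needed.
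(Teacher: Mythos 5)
Your proof is correct and follows essentially the same route as the paper's: express $[X_{\bSigma \cap F}]$ via its SOD in the Coulomb problem $A_F$, use Lemma~\ref{l:B-pullback} (applied twice, once in $A_F$ and once in the original problem) to identify the multiplicities there with the $n_{F'}$, and then take $\rank K_0$. The bookkeeping points you flag (that minimal faces of $F$ in $A_F$ are the minimal faces $F' \leq F$ of $\Pi$, and that $\bSigma \cap F$ is a valid chamber) are exactly the tacit identifications the paper relies on.
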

\begin{proof}
% We define class $[X_\bSigma] \in \Z\fcal$ as
% $$ [X_\bSigma] = \oplus_{F \in \fcal} [X_{\bSigma}: Z_{/F}] [Z_{/F}]. $$
For each minimal face $F$, we have
\begin{equation}
    K_0(X_{\bSigma \cap F}) = \oplus_{F' \leq F} K_0(Z_{F/F'})^{\oplus [X_{\bSigma \cap F}: Z_{F/F'}] }. \label{e:recursive-sod}
\end{equation}  
By our main Lemma (\ref{l:B-pullback}), we have that
$$ [X_{\bSigma \cap F}: Z_{F/F'}] = [X_{\bSigma \cap F \cap F'}: Z_{F'/F'}] = [X_{\bSigma \cap F'}: Z_{F'/F'}] = n_{F'}. $$
%Denote $n_{F'} = [X_{\bSigma}, Z_{/F'}]$ as the unknown variables. 
Taking rank on both sides in Eq \eqref{e:recursive-sod}, we get the desired proposition.
\end{proof}

%We can figure out $n^B_{\bSigma, F}$ inductively. 
%If $F$ is any minimal face, let $\Ga = [N]_{F}$, we may define restriction of the stacky fan $\bSigma$ to that face $F'$ as $\bSigma^{F} = (\NN_\Ga, \Sigma \cap F, S \cap F)$. 

%Apply the above volume relation, we get

\subsection{Examples}
Here we illustrate how to get the SOD multiplicity of $n^B_{\bSigma,F}$ in two different ways. The first way is by wall crossing, and count how each wall $Z_W$ decompose into various $Z_{/F}$. The second way is to use the recursive formula we had. 
\begin{example}

Consider the following $a_i$ points in $\NN=\Z^2$. Our initial stacky fan is $\bSigma = \conv(\{0\} \cup A)$, shown in yellow in the first figure. We have 
$$ X_{\bSigma} = [\C/\Z_3] \times [\C/\Z_2].$$

Then, we show a sequence of shrinking $\bSigma$ (shown in yellow), and show how the lost volume during circuit transition contribute to various SOD components. The total volume of $\bSigma$ is 6. There are 4 minimal faces $$F_1 = \Pi, \quad F_2 = \conv(0, a), \quad F_3=\conv(0,b), \quad F_4 = \{(0,0)\} $$ where $a=(3,0), b=(0,2)$. They corresponds to $$ Z_{/F_1}=pt, \quad Z_{/F_2}=\C, \quad Z_{/F_3}=\C, \quad Z_{/F_4} = \C^2 $$
They all have $\rank(Z_{/F_i})=1$. 

The sequence of 6 figures (from left to right, from top to bottom) represents the sequence of chambers separated by wall-crossings. From figure (1) to (2), we lose volume (blue) of 1 unit in the interior, hence this volume is attributed to $F_1$. From (2) to (3), we lose volume (green) of 1 unit, which is attributed to face $F_2$. From (3) to (4), we lose 1 unit of volume attributed to $F_2$ again. From (4) to (5), we lose 1 unit of volume (cyan) to face $F_3$. From (5) to (6), we lose 1 unit of volume to face $F_1$ again. And finally, from figure (6) to nothing, we lose 1 unit of volume (yellow), which belongs to $F_4$. 
In total,  we have the decomposition of total volume as 
$$ 6 = \underbrace{2}_{F_1} + \underbrace{2}_{F_2}  + \underbrace{1}_{F_3}  + \underbrace{1}_{F_4}. $$

\begin{figure}[hhh]
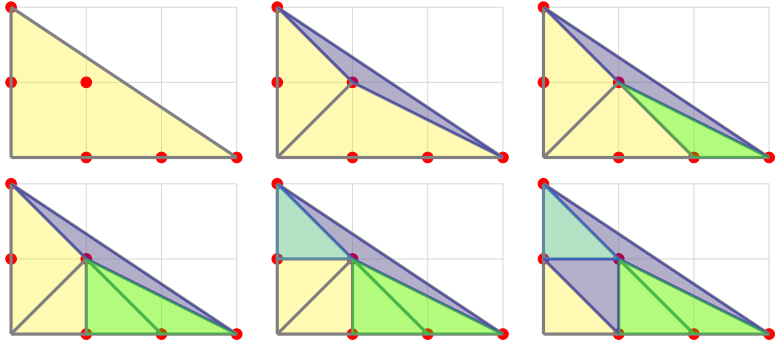

    \centering
$$
\tikz {
\draw [gray, opacity=0.3] (0, 0) grid (3, 2); 
\draw [fill, yellow, opacity = 0.3] (0,0) -- (3, 0) -- (0, 2) -- cycle; 
\filldraw [red] (1,0) circle (2pt);
\filldraw [red] (2,0) circle (2pt);
\filldraw [red] (1,1) circle (2pt);
\filldraw [red] (3,0) circle (2pt);
\filldraw [red] (0,1) circle (2pt);
\filldraw [red] (0,2) circle (2pt);
\draw[gray, very thick] (0,0) -- (0,2);
\draw[gray, very thick] (0,0) -- (3,0);
\draw[gray, very thick] (0,2) -- (3,0);
}\quad
\tikz{
\draw [gray, opacity=0.3] (0, 0) grid (3, 2); 
\draw [fill, yellow, opacity = 0.3] (0,0) -- (3, 0) -- (0, 2) -- cycle; 
\filldraw [red] (1,0) circle (2pt);
\filldraw [red] (2,0) circle (2pt);
\filldraw [red] (1,1) circle (2pt);
\filldraw [red] (3,0) circle (2pt);
\filldraw [red] (0,1) circle (2pt);
\filldraw [red] (0,2) circle (2pt);
\draw[gray, very thick] (0,0) -- (0,2);
\draw[gray, very thick] (0,0) -- (3,0);
\draw[gray, very thick] (0,2) -- (3,0);
\draw[gray, very thick] (0,0) -- (1,1);
\draw[gray, very thick] (1,1) -- (3,0);
\draw[gray, very thick] (1,1) -- (0,2);
\filldraw [blue, opacity=0.3] (1,1)--(3,0) -- (0,2)-- cycle;
}\quad
\tikz{
\draw [gray, opacity=0.3] (0, 0) grid (3, 2); 
\draw [fill, yellow, opacity = 0.3] (0,0) -- (3, 0) -- (0, 2) -- cycle; 
\filldraw [red] (1,0) circle (2pt);
\filldraw [red] (2,0) circle (2pt);
\filldraw [red] (1,1) circle (2pt);
\filldraw [red] (3,0) circle (2pt);
\filldraw [red] (0,1) circle (2pt);
\filldraw [red] (0,2) circle (2pt);
\draw[gray, very thick] (0,0) -- (0,2);
\draw[gray, very thick] (0,0) -- (3,0);
\draw[gray, very thick] (0,2) -- (3,0);
\draw[gray, very thick] (0,0) -- (1,1);
\draw[gray, very thick] (1,1) -- (2,0);
\draw[gray, very thick] (1,1) -- (3,0);
\draw[gray, very thick] (1,1) -- (0,2);
\filldraw [blue, opacity=0.3] (1,1)--(3,0) -- (0,2)-- cycle;
\filldraw [green, opacity=0.3] (1,1)--(3,0) -- (2,0)-- cycle;
}
$$
$$
\tikz{
\draw [gray, opacity=0.3] (0, 0) grid (3, 2); 
\draw [fill, yellow, opacity = 0.3] (0,0) -- (3, 0) -- (0, 2) -- cycle; 
\filldraw [red] (1,0) circle (2pt);
\filldraw [red] (2,0) circle (2pt);
\filldraw [red] (1,1) circle (2pt);
\filldraw [red] (3,0) circle (2pt);
\filldraw [red] (0,1) circle (2pt);
\filldraw [red] (0,2) circle (2pt);
\draw[gray, very thick] (0,0) -- (0,2);
\draw[gray, very thick] (0,0) -- (3,0);
\draw[gray, very thick] (0,2) -- (3,0);
\draw[gray, very thick] (0,0) -- (1,1);
\draw[gray, very thick] (1,0) -- (1,1);
\draw[gray, very thick] (1,1) -- (2,0);
\draw[gray, very thick] (1,1) -- (3,0);
\draw[gray, very thick] (1,1) -- (0,2);
\filldraw [blue, opacity=0.3] (1,1)--(3,0) -- (0,2)-- cycle;
\filldraw [green, opacity=0.3] (1,1)--(3,0) -- (2,0)-- cycle;
\filldraw [green, opacity=0.3] (1,1)--(2,0) -- (1,0)-- cycle;
}\quad
\tikz{
\draw [gray, opacity=0.3] (0, 0) grid (3, 2); 
\draw [fill, yellow, opacity = 0.3] (0,0) -- (3, 0) -- (0, 2) -- cycle; 
\filldraw [red] (1,0) circle (2pt);
\filldraw [red] (2,0) circle (2pt);
\filldraw [red] (1,1) circle (2pt);
\filldraw [red] (3,0) circle (2pt);
\filldraw [red] (0,1) circle (2pt);
\filldraw [red] (0,2) circle (2pt);
\draw[gray, very thick] (0,0) -- (0,2);
\draw[gray, very thick] (0,0) -- (3,0);
\draw[gray, very thick] (0,2) -- (3,0);
\draw[gray, very thick] (0,0) -- (1,1);
\draw[gray, very thick] (1,0) -- (1,1);
\draw[gray, very thick] (0,1) -- (1,1);
\draw[gray, very thick] (1,1) -- (2,0);
\draw[gray, very thick] (1,1) -- (3,0);
\draw[gray, very thick] (1,1) -- (0,2);
\filldraw [blue, opacity=0.3] (1,1)--(3,0) -- (0,2)-- cycle;
\filldraw [green, opacity=0.3] (1,1)--(3,0) -- (2,0)-- cycle;
\filldraw [green, opacity=0.3] (1,1)--(2,0) -- (1,0)-- cycle;
\filldraw [cyan, opacity=0.3] (1,1)--(0,2) -- (0,1)-- cycle;
}\quad
\tikz{
\draw [gray, opacity=0.3] (0, 0) grid (3, 2); 
\draw [fill, yellow, opacity = 0.3] (0,0) -- (3, 0) -- (0, 2) -- cycle; 
\filldraw [red] (1,0) circle (2pt);
\filldraw [red] (2,0) circle (2pt);
\filldraw [red] (1,1) circle (2pt);
\filldraw [red] (3,0) circle (2pt);
\filldraw [red] (0,1) circle (2pt);
\filldraw [red] (0,2) circle (2pt);
\draw[gray, very thick] (0,0) -- (0,2);
\draw[gray, very thick] (0,0) -- (3,0);
\draw[gray, very thick] (0,2) -- (3,0);
\draw[gray, very thick] (1,0) -- (0,1);
\draw[gray, very thick] (1,0) -- (1,1);
\draw[gray, very thick] (0,1) -- (1,1);
\draw[gray, very thick] (1,1) -- (2,0);
\draw[gray, very thick] (1,1) -- (3,0);
\draw[gray, very thick] (1,1) -- (0,2);
\filldraw [blue, opacity=0.3] (1,1)--(3,0) -- (0,2)-- cycle;
\filldraw [green, opacity=0.3] (1,1)--(3,0) -- (2,0)-- cycle;
\filldraw [green, opacity=0.3] (1,1)--(2,0) -- (1,0)-- cycle;
\filldraw [cyan, opacity=0.3] (1,1)--(0,2) -- (0,1)-- cycle;
\filldraw [blue, opacity=0.3] (1,1)--(1,0) -- (0,1)-- cycle;
}
$$
    \caption{Multiplicities from Volume allocation.}
    \label{fig:my_label}
\end{figure}

Next, we compute the SOD multiplicities again using the recursive relation.
Denote $n_i = [X_\bSigma, Z_{/F_i}]$, and denote $m_i = \rank(Z_{/F_i})$. 
First, since $F_4$ is smallest face, we have $n_4=1$. 
Then, consider the equation for face $F_3$. $F_3$ has a proper subface $F_4$, and we have $m_4=1, m_3=1$. We have
$$ 2 = \rank(X_{\bSigma \cap F_3}) = \rank([\C/\Z_2])= n_3 + n_4 = n_3 + 1$$
hence $n_3=1$. 
Next, we compute $n_2$ using $F_2$ equation, we get
$$ 3 =  \rank(X_{\bSigma \cap F_2}) =  \rank([\C/\Z_3])= n_2+n_4 = n_2 + 1$$
hence $n_2=2$. 
Lastly, we compute $n_1$, using $F_1$, which gives 
$$ 6 = n_1 + n_2 + n_3 + n_4 = n_1 + 2 + 1 + 1 $$
Thus $n_1=2$. 
\end{example}

We note that the SOD components really depends on the entire GIT problem, instead of just the initial phase $X_\bSigma$ that we try to decompose. 

\begin{example}
Consider a 1-dimensional problem with rays (in the stacky fan) $\{1,3\}$ and the problem with rays $\{1,2,3\}$. Then, we get two different SODs of $[\mathbb C/\mathbb{Z}_3]$, one is $\langle \C, \C/\Z_2 \rangle$ and the other is  $\langle \C, \C, \C \rangle$. 
\end{example}

\section{Multiplicity Conjecture for toric CY GIT}
Consider a toric GIT problem $Q: \ZvN \to \Lv$ that satisfies the Calabi-Yau condition, i.e.  $\sum_{i=1}^N Q(e_i^\vee)=0$.

In this case, GKZ defined a principal $A$-determinant $E_A$, an integer polynomial with $N$ variables. $E_A$ has many nice properties, its Newton polytope equals the secondary polytope, and its zero-loci descent to a hypersurface $\nabla_{GKZ} \In \Lv_\CS$, the GKZ discriminant loci. 

It is known that divisor $\nabla_{GKZ}$ has decomposition into components $\nabla_\Ga$, 
$$ \nabla_{GKZ} = \sum_{\Ga \in \fcal } m_\Ga \nabla_\Ga, $$
where $\fcal$ denote the set of minimal faces, where $m_\Ga = \rank(K_0(Z_\Ga))$ are certain multiplicities. 

It is also known that the tropicalization of $\nabla_{GKZ}$ is the union of walls in $\Sigma_{GKZ}$. We will now decorate the walls by multiplicities in free abelian group generated by minimal faces.

Let $\Z \fcal$ denote the free abelian group generated by $\fcal$, with basis denoted by $[\Ga]$. Then for each wall $W$, we define
$$ [W] := \sum_\Ga [Z_W: Z_\Ga] \cdot [\Ga] = \sum_\Ga n^B_{W, \Ga} \cdot [\Ga]. $$

Let $\wcal$ denote the set of walls in $\Sigma_{GKZ}(Q)$. Let $\Z \wcal$ be the free abelian group generated by $\wcal$, with basis denoted as $\la W \ra$. Then any tropical complex supported on $\Sigma_{GKZ}$ is equivalent to a non-negative linear combination of the walls. In particular, we have 
$$ \nabla_\Ga^{trop} := \sum_{W \in \wcal} n^A_{W, \Ga} \la W \ra $$

\begin{definition}
Let $Q$ be a toric CY GIT problem. A tropical complex is an element in $\Z \wcal$, and a decorated tropical complex is an element in $\Z(\wcal \times \fcal)$. 
We define the A-model tropical complexes as
$$ \nabla^A_{GKZ}= \sum_{\Ga} \nabla_\Ga^{trop} \cdot [\Ga] = \sum_{\Ga, W} n^A_{W, \Ga} \la W \ra [\Ga], \quad \nabla_\Ga^A = \sum_W n^A_{W, \Ga} \la W \ra  $$
We define the B-model tropical complexes as
$$ \nabla^B_{GKZ} = \sum_{W} \la W \ra [W] = \sum_{W, \Ga}  n^B_{W, \Ga} \la W \ra [\Ga], \quad \nabla_\Ga^B = \sum_W n^B_{W, \Ga} \la W \ra  $$

We write $\nabla^A_{GKZ}(Q), \nabla^A_\Ga(Q), \nabla^B_{GKZ}(Q), \cdots$ to emphasize the dependence on $Q$. 
\end{definition}

Our goal is to show that $\nabla^A(Q) = \nabla^B(Q)$, or more concretely for each $\Ga$, $\nabla^A_\Ga(Q) = \nabla^B_\Ga(Q)$. We first reduce the task to showing just the top dimensional minimal face. 

Let $Q_\Ga: (\Zv)^\Ga \to \Lv_\Ga$ denote the Coulomb problem for minimal face $\Ga$. Note that $Q_\Ga$ still defines a toric CY GIT. Let $\pi_\Ga: \Lv_\R \to (\Lv_\Ga)_\R$. Define $\pi_\Ga^*: \Z(\wcal_\Ga \times \fcal_\Ga) \to \Z(\wcal \times \fcal)$. For a wall $W_\Ga \in \wcal_\Ga$, we define $\pi_\Ga^*(\la W_\Ga\ra) = \la \pi^{-1} W_\Ga \ra = \sum_{W \in \wcal, \pi_\Ga(W) = W_\Ga} \la W \ra$. For any $\Ga' \in \fcal_\Ga$,  since $\fcal_\Ga \In \fcal$, we have $\pi_\Ga^*([\Ga']) = [\Ga']$ . 

\begin{proposition}\label{p:pullback2}
$ \nabla^A_\Ga(Q) = \pi_\Ga^*(\nabla^A_\Ga(Q_\Ga))$ and 
$ \nabla^B_\Ga(Q) = \pi_\Ga^*(\nabla^B_\Ga(Q_\Ga))$.
\end{proposition}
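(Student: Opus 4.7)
I plan to handle the B-side and A-side separately, with the B-side an immediate consequence of the main lemma and the A-side a direct application of the classical pullback description of $\nabla_\Ga$. The crucial identification underlying both sides is that $\pi_\Ga: \Lv_\R \to \Lv_{\Ga, \R}$ coincides with the quotient map $\Lv_\R \to \Lv_\R/H$, where $H = H_F$ is the relevant subspace corresponding to the minimal face $\Ga = [N]_F$. This follows from $\pi_\Ga(q_i) = 0$ for $i \notin \Ga$ (visible in the snake-lemma diagram, since $e_i^\vee$ restricts to $0$ on $(\Z^\Ga)^\vee$ for $i\notin\Ga$), giving $H \subseteq \ker \pi_\Ga$, together with a dimension count using the minimal-face/relevant-subspace bijection after Definition \ref{d:minface-relsub}.

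For the B-side, fix a wall $W \in \wcal(Q)$. By definition $n^B_{W,\Ga}(Q) = [Z_W : Z_H]$, and Lemma \ref{l:B-pullback}, applied to the wall phase $Z_W$ of the appropriate sub-problem, yields $[Z_W : Z_H] = [Z_{W/H} : Z_{H/H}]$. Under the above identification, the Higgs problem $Q_{/H}$ agrees with the Coulomb problem $Q_\Ga$, and $Z_{H/H}$ is the minimal phase $Z_{/\Ga}^{Q_\Ga}$. Hence $[Z_{W/H} : Z_{H/H}] = n^B_{\pi_\Ga(W),\Ga}(Q_\Ga)$ whenever $\pi_\Ga(W)$ is a wall; when $W \not\parallel H$, both sides vanish, matching the fact that $\pi_\Ga(W)$ then fails to be codimension one. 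Regrouping the defining sum by $W_\Ga := \pi_\Ga(W) \in \wcal(Q_\Ga)$ gives
\begin{equation*}
\nabla^B_\Ga(Q) \;=\; \sum_{W_\Ga \in \wcal(Q_\Ga)} n^B_{W_\Ga, \Ga}(Q_\Ga) \sum_{W:\, \pi_\Ga(W) = W_\Ga} \la W \ra \;=\; \pi_\Ga^*\bigl(\nabla^B_\Ga(Q_\Ga)\bigr).
\end{equation*}

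For the A-side, I will invoke the classical fact, recalled in the introduction following \cite{GKZ-book}, that when $\Ga$ is a proper minimal face, the irreducible component $\nabla_\Ga \subset \Lv_\CS$ is the $\pi_\Ga$-pullback of a discriminant $\nabla'_\Ga \subset \Lv_{\Ga, \CS}$. Unwinding the construction of the principal $A$-determinant, $\nabla'_\Ga$ is identified with the top-face discriminant $\nabla_\Ga(Q_\Ga)$ of the Coulomb problem (where $\Ga$ now plays the role of the whole polytope $\Pi_{Q_\Ga} = F$, which remains a minimal face since the redundancy condition is preserved). Since $\pi_\Ga$ is a surjective homomorphism of algebraic tori, tropicalization commutes with the preimage operation and preserves multiplicities on maximal cells. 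Hence $n^A_{W,\Ga}(Q) = n^A_{\pi_\Ga(W), \Ga}(Q_\Ga)$ whenever $\pi_\Ga(W)$ is a wall and zero otherwise, which is $\nabla^A_\Ga(Q) = \pi_\Ga^*(\nabla^A_\Ga(Q_\Ga))$.

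The main obstacle is the initial identification: checking that $\ker \pi_\Ga$ equals $H_F$ (not merely contains it) and that the Higgs problem $Q_{/H}$ of the main lemma agrees with the Coulomb problem $Q_\Ga$ at the level of GKZ stratification and stacky $K$-theory, including any finite-index issues coming from torsion in $\NN$. This is encoded in the minimal-face/relevant-subspace bijection together with a careful snake-lemma computation relating the lattices $L_\Ga$, $L/L_{[N]\setminus\Ga}$ and $\ker \pi_\Ga$. Once these identifications are in place, the B-side is essentially formal from Lemma \ref{l:B-pullback}, and the A-side is a direct application of the tropicalization-of-pullback principle for surjective torus homomorphisms.
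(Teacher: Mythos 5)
Your proof takes essentially the same route as the paper: the B-side follows by applying Lemma~\ref{l:B-pullback} to each wall contribution $[Z_W:Z_\Ga]$ and regrouping the sum by $W_\Ga=\pi_\Ga(W)$, and the A-side follows from the GKZ fact that $\nabla_\Ga(Q)=\pi_\Ga^{-1}(\nabla_\Ga(Q_\Ga))$ together with compatibility of tropicalization with surjective torus homomorphisms. You spell out two points the paper leaves implicit (that $\ker\pi_\Ga=H_F$, so the Coulomb quotient $\pi_\Ga$ agrees with the relevant-subspace quotient $\pi_H$ used in the lemma, and that tropicalization commutes with preimage and preserves multiplicities), but these are genuinely the same argument, just made more explicit.
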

\begin{proof}
On the A-side, this follows from relation from the complex hypersurfaces $\nabla_\Ga(Q) = \pi_\Ga^{-1} (\nabla_\Ga(Q_\Ga) )$. 

On the B-side, we know from Lemma \ref{l:B-pullback} that $[Z_W: Z_\Ga] = [Z_{W_\Ga}: Z_{\Ga / \Ga}]$. Thus
$$ \nabla_\Ga^B(Q) = \sum_W [Z_W: Z_\Ga] \la W \ra = \sum_{W: W \parallel \Ga} [Z_{W_\Ga}: Z_{\Ga / \Ga}] \la W \ra = \sum_{W_\Ga \in \wcal_\Ga} [Z_{W_\Ga}: Z_{\Ga/\Ga}] \pi_\Ga^*(\la W_\Ga \ra) =\pi_\Ga^*(\nabla^B_\Ga(Q_\Ga)).$$
\end{proof}

%Hence, it suffices to prove that for CY toric GIT problem $Q$, we have $\nabla^A_\Pi(Q) = \nabla^B_\Pi(Q)$. (If $\Pi$ is not a minimal face, then both sides are zero, and there is nothing to prove.) 

We define a rank map, 
$$\rank: \Z \fcal \to \Z, \quad \rank([\Ga]):= \rank (K_0(Z_\Ga)). $$ And we extend it by linearity to 
$$ \rank : \Z(\wcal \times \fcal) \to \Z(\wcal). $$

Horja-Katzarkov proved a numerical version of the desired theorem.
\begin{proposition}[\cite{horja2022discriminants}, Theorem 3.5] \label{p:rank}
$ \rank(\nabla^A_{GKZ}) = \rank(\nabla^B_{GKZ})$
\end{proposition}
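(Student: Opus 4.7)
The plan is to prove the identity wall-by-wall. Since the rank map $\rank: \Z\fcal \to \Z$ is $\Z$-linear and acts as the identity on the $\la W \ra$ factor of $\Z(\wcal \times \fcal)$, we may write
$$ \rank(\nabla^A_{GKZ}) - \rank(\nabla^B_{GKZ}) = \sum_{W \in \wcal} \Bigl( \sum_{\Ga \in \fcal} (n^A_{W,\Ga} - n^B_{W,\Ga})\,\rank(Z_\Ga) \Bigr) \la W \ra, $$
so it suffices to establish the scalar identity $\sum_\Ga n^A_{W,\Ga}\rank(Z_\Ga) = \sum_\Ga n^B_{W,\Ga}\rank(Z_\Ga)$ for each wall $W$.

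For the B-side, I would apply the rank map to the semiorthogonal decomposition of $\Coh(Z_W)$ into $n^B_{W,\Ga}$ copies of $\Coh(Z_\Ga)$: the resulting additivity of $K_0$ ranks gives
$$ \sum_\Ga n^B_{W,\Ga} \rank(Z_\Ga) = \rank(K_0(Z_W)), $$
which by the rank-volume correspondence (Section \ref{s:volume-of-fan}) equals the stacky volume $\vol(\bSigma_W)$ of the fan determined by $W$. For the A-side, I would invoke the classical GKZ factorization $E_A = \prod_\Ga \bigl(\text{defining polynomial of } \nabla_\Ga\bigr)^{m_\Ga}$ with $m_\Ga = \rank(Z_\Ga)$, so that as tropical divisors $\nabla_{GKZ}^{trop} = \sum_\Ga \rank(Z_\Ga)\cdot \nabla_\Ga^{trop}$. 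Then $\sum_\Ga n^A_{W,\Ga}\rank(Z_\Ga)$ is the tropical multiplicity of the hypersurface $\nabla_{GKZ}$ along $W$, which by the standard tropical-hypersurface formula equals the lattice length of the edge of $\text{Newt}(E_A)$ dual to $W$; since $\text{Newt}(E_A)$ is the secondary polytope, this edge corresponds to the bistellar flip across $W$.

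The remaining step — matching the lattice length of the secondary polytope edge dual to $W$ with $\vol(\bSigma_W)$ — is the main obstacle. It is a combinatorial statement about how the weighted volume difference of two neighboring triangulations is absorbed into the stacky volume of the lower-dimensional wall fan, and one has to be careful with the torsion contribution when $\NN_{tors} \neq 0$. This is precisely the content of Horja-Katzarkov's Theorem 3.5 \cite{horja2022discriminants}, so in practice I would simply cite that result; a direct proof would require the edge-length formula for secondary polytopes generalized to the stacky setting, which is essentially what Horja-Katzarkov carry out.
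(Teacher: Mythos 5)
Your proof takes essentially the same route as the paper: both reduce the identity to a wall-by-wall statement and then cite Horja--Katzarkov's Theorem~3.5 for the substantive content. The paper phrases the citation as $\rank([W]) = \sum_\Ga n^A_{W,\Ga}\rank([\Ga])$ and combines it with the $K_0$-additivity of the SOD defining $[W]$; you unpack the two sides further (B-side via rank--volume of $\bSigma_W$, A-side via the GKZ factorization of $E_A$ and the lattice length of the dual secondary-polytope edge), but the residual hard step you identify is exactly the same input from loc.\ cit.
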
 
\begin{proof}
In loc.cit, we have 
$\rank([W]) = \sum_\Ga n^A_{W, \Ga} \rank([\Ga])$. 
Hence
$$ \rank(\nabla^B_{GKZ}) = \sum_W  \rank([W])\la W \ra = \sum_{W, \Ga} n^A_{W, \Ga} \rank([\Ga]) \la W \ra  = \rank(\nabla^A_{GKZ}). $$
\end{proof}

Now we are ready to prove our main theorem. 
\begin{theo} \label{t: main}
For any toric CY GIT problem $Q$, we have $\nabla^A_{GKZ}(Q) = \nabla^B_{GKZ}(Q)$, i.e. $n^A_{W, \Ga} = n^B_{W, \Ga}$ for any wall $W$ and minimal faces $\Ga$.  
\end{theo}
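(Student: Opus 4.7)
My approach combines the pullback identity of Proposition \ref{p:pullback2} with the integrated rank identity of Proposition \ref{p:rank} (Horja--Katzarkov), turning them into a triangular system solvable by induction. The first step is to use Proposition \ref{p:pullback2} to reduce to ``top-face'' multiplicities: for any minimal face $\Ga$ of $Q$, both $n^A_{W,\Ga}(Q)$ and $n^B_{W,\Ga}(Q)$ are pullbacks of the corresponding multiplicities in the Coulomb problem $Q_\Ga$, where $\Ga$ now plays the role of the top (full-polytope) minimal face of $Q_\Ga$. It therefore suffices to prove, for every toric CY GIT problem $Q'$ and every wall $W$ in $\Sigma_{GKZ}(Q')$, the top-face equality $n^A_{W,\Pi}(Q') = n^B_{W,\Pi}(Q')$, where $\Pi:=\Pi(Q')$ is the full polytope of $Q'$ viewed as a minimal face.

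I would establish this by induction on $\dim \Pi(Q')$. Fix $Q'$ and let $\Ga' \subsetneq \Pi$ be any proper minimal face. The Coulomb problem $Q'_{\Ga'}$ is again CY: for $\xi \in L_{\Ga'} \subset L'$ one has $q_i(\xi)=\xi_i=0$ whenever $i \notin \Ga'$, so the relation $\sum_i q_i = 0$ restricts to $\sum_{i \in \Ga'} q_i|_{L_{\Ga'}} = 0$. Moreover $\dim \Pi(Q'_{\Ga'}) = \dim \Ga' < \dim \Pi(Q')$, so the induction hypothesis supplies the top-face equality in $Q'_{\Ga'}$; pulling back via Proposition \ref{p:pullback2} then gives $n^A_{W,\Ga'}(Q') = n^B_{W,\Ga'}(Q')$ for every proper $\Ga' \subsetneq \Pi$. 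Now apply Proposition \ref{p:rank} to $Q'$ itself: the contributions indexed by proper faces cancel on both sides, leaving $n^A_{W,\Pi}(Q') \cdot \rank([\Pi]) = n^B_{W,\Pi}(Q') \cdot \rank([\Pi])$, and since $Z_{/\Pi}$ is a point we have $\rank([\Pi])=1$, closing the induction. The base case ($\dim \Pi=0$, or any $Q'$ whose only minimal face is $\Pi$) is immediate from Proposition \ref{p:rank} alone.

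The main technical point is the bookkeeping under the Coulomb reduction: one must check that the minimal faces of $Q'_{\Ga'}$ correspond bijectively to the minimal faces of $Q'$ contained in $\Ga'$, with matching walls, $Z_{\ast}$'s, and $K_0$-ranks, so that Proposition \ref{p:rank} applied inside $Q'_{\Ga'}$ really supplies the data needed after the pullback $\pi^*_{\Ga'}$. This identification is essentially already present in the preceding sections following \cite{kite-segal}, and once in hand the whole proof becomes a triangular solve in the spirit of the recursive formula in Proposition \ref{p:recursive}.
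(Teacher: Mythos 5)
Your proof is correct and follows essentially the same route as the paper: both induct on the dimension of the problem, use Proposition \ref{p:pullback2} to deduce the equality for proper minimal faces $\Ga'$ from the inductive hypothesis applied to the Coulomb problem $Q_{\Ga'}$, and then invoke the Horja--Katzarkov rank identity (Proposition \ref{p:rank}) together with $\rank([\Pi])=1$ to extract the remaining top-face equality. The only cosmetic differences are that you phrase the inductive hypothesis as the ``top-face'' statement and induct on $\dim\Pi$ rather than $\rank L$ (and you spell out that the Coulomb problem inherits the CY condition, which the paper leaves implicit).
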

\begin{proof}
We induct on the rank of $L$. In the case $\rank L = 0$, there is no wall, hence nothing to prove. Assume the case for $\rank L < n$ is proven, and we have $\rank L =n$.

Let $\Ga_{0}$ be largest minimal face, concretely $\Ga_{0} = \{i: q_i \neq 0 \}$.  For any minimal face $\Ga \neq \Ga_0$, we have $\rank L_\Ga < n$, hence by induction $\nabla_\Ga^A(Q_\Ga) = \nabla_\Ga^B(Q_\Ga)$. By Proposition \ref{p:pullback2}, we have $\nabla_\Ga^A(Q) =\nabla_\Ga^B(Q)$ for all minimal faces $\Ga \neq \Ga_0$. 
Thus, we have 
$$ \nabla^A_{GKZ}(Q) - \nabla^B_{GKZ}(Q) = \sum_\Ga (\nabla^A_\Ga(Q) - \nabla^B_\Ga(Q)) [\Ga] = (\nabla^A_{\Ga_0}(Q) - \nabla^B_{\Ga_0}(Q)) [\Ga_0]$$
Now, take $\rank$ on both sides, using $\rank([\Ga_0]) = \rank(K_0\Coh(Z_{\Ga_0}))=\rank(K_0\Coh(\A^{\Ga_0^c}))=1 \neq 0$, and Horja-Katzarkov result Proposition \ref{p:rank}, we get $$ 0 = \nabla^A_{\Ga_0}(Q) - \nabla^B_{\Ga_0}(Q). $$
Thus $\nabla^A_{GKZ}(Q) = \nabla^B_{GKZ}(Q)$. 
\end{proof}

\subsection{Examples}
\begin{example}
Consider the example in \cite[Example 3.7]{kite-segal} with an additional weight $-\det V=(-1,-2)$ (marked in blue) to get a toric CY GIT problem. There are three relevant subspaces: 
$$ H_0=L^\vee_{\mathbb{R}}, \quad H_1=\span_{\R}(q_3,q_4, q_6), \quad H_2=\{0\} $$ 
We compute the tropical complexes in this CY problem. 

\begin{figure}[hhh]
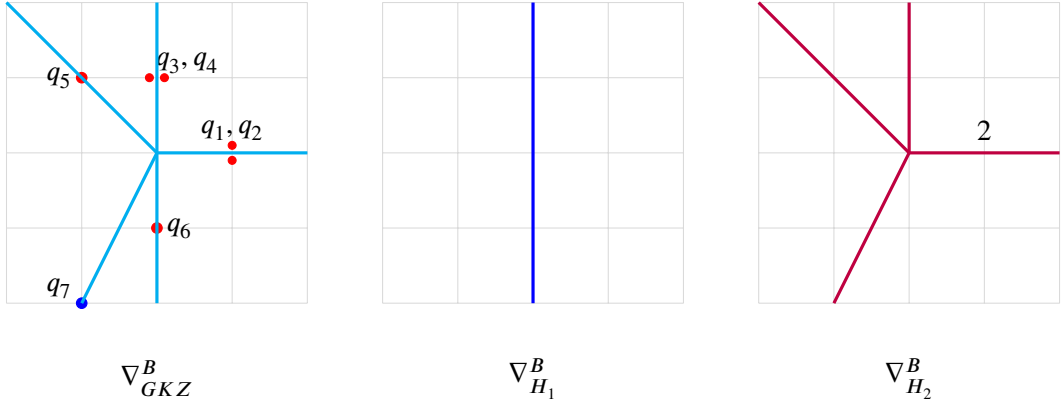

    \centering
\tikz[baseline=-4]{
\begin{scope}
\draw [gray, opacity=0.3] (-2, -2) grid (2, 2); 
\filldraw [red] (1,0.1) circle (1.5pt);
\filldraw [red] (1,-0.1) circle (1.5pt);
\node at (1,0.3) {$q_1, q_2$};
\filldraw [red] (0.1,1) circle (1.5pt);
\filldraw [red] (-0.1,1) circle (1.5pt);
\node at (0.4,1.2) {$q_3, q_4$};
\filldraw [red] (-1,1) circle (2pt);
\node at (-1.3,1) {$q_5$};
\filldraw [red] (0,-1) circle (2pt);
\node at (0.3,-1) {$q_6$};
\filldraw [blue] (-1,-2) circle (2pt);
\node at (-1.3,-1.8) {$q_7$};

\draw[cyan, very thick] (0,0) -- (2,0);
\draw[cyan, very thick] (0,0) -- (0,2);
\draw[cyan, very thick] (0,0) -- (-2,2);
\draw[cyan, very thick] (0,0) -- (-1,-2);
\draw[cyan, very thick] (0,0) -- (0,-2);
\node at (0, -3) {$\nabla^{B}_{GKZ}$};
\end{scope}
\begin{scope}[shift={(5,0)}]
\draw [gray, opacity=0.3] (-2, -2) grid (2, 2); 
\draw[blue, very thick] (0,-2) -- (0,2);
\node at (0, -3) {$\nabla^{B}_{H_1}$}; 
\end{scope} 
\begin{scope}[shift={(10,0)}]
\draw [gray, opacity=0.3] (-2, -2) grid (2, 2); 
\draw[purple, very thick] (0,0) -- (2,0);
\draw[purple, very thick] (0,0) -- (0,2);
\draw[purple, very thick] (0,0) -- (-2,2);
\draw[purple, very thick] (0,0) -- (-1,-2);
\node at (1,0.3) {$2$};
\node at (0, -3) {$\nabla^{B}_{H_2}$}; 
\end{scope}

}

    \caption{Decomposition of $\nabla^B_{GKZ} = \sum_H \nabla^B_H$, where $H$ labels the relevant subspaces.}
    \label{fig:}
\end{figure}

($H_0$) Since the problem is CY, every chamber is a minimal chamber. The Coulomb problem is the trivial problem $\Z^0\xrightarrow{0}\Z^0$, and $\nabla_{H_0}^B=\emptyset$.

($H_1$) From the Higgs problem $\Gamma_1^c=\{1,3,4\}$ we know $Z_{H_1}=\A^1$. The Coulomb problem for $\Gamma_1^c$ is the Atiyah flop $Q=(1,1,-1,-1)$. We claim $\nabla_{H_1}^B$ has multiplicity 1 on the contributing walls, which is a result proven in \cite{kite-segal}  when $\rank L=1$. (Indeed, any irreducible hypersurface in $\C^*$ is a point with multiplicity 1). 

($H_2$) In this case, the Coulomb problem is the entire GIT problem. The picture on the right is the tropical complex $\nabla_{H_2}^B$ where one of the walls has multiplicity 2. These multiplicities can be computed by looking at the Higgs problem. For the wall with multiplicity 2, $Z_W=\P^1$ and $Z_\Gamma=pt$, and $\Coh(\P^1)=\langle \Coh(pt), \Coh(pt)\rangle$. For other walls, $Z_W=pt$ and $Z_\Gamma=pt$.
\end{example}

\begin{example}
% This example shows that the conjectural formula \cite[Remark 4.7] {halpern2016autoequivalences} is false. 

Consider toric CY GIT problem $Q: \Z^5 \to \Z^3$, with weight matrix 
$$ Q = \begin{pmatrix}
1 & 1 & 0 & 0  & -n  & n-2 \\
0 & 0 & 1 & 0 & 1 & -2 \\
0 & 0  & 0 & 1 & 0 &  -1
\end{pmatrix}
$$
and let $q_i$ be the column vectors. 
Assume $n \geq 2$. Consider the $xy$ plane, which contains weights $q_1=q_2=(1,0), q_3=(0,1), q_5=(-n,1)$ (omitting the 3rd coordinate). The following is a picture of the $xy$ plane for $n=2$. 
$$
\tikz{
\draw (0,0) -- (3,0);
\draw (0,0) -- (0,1.5);
\draw (0,0) -- (-3,1.5);
\filldraw [red] (1,0) circle (2pt);
\filldraw [red] (0,1) circle (2pt);
\filldraw [red] (-2,1) circle (2pt);
\node at (1,0) [above] {$q_1,q_2$};
\node at (0,1) [right] {$q_3$};
\node at (-2,1)[above] {$q_5$};
\node at (2,1) [right] {$\F_2$};
\node at (-1.5,1) [right] {$\P^2_{1,1,2}$};
\node at (-1.5, 2) [right] {$ W_1$};
\node at (1.5, 2) [right] {$ W_2$};
\draw [fill, yellow, opacity=0.3] (0,0) rectangle (3,1.5);
\draw [fill, green, opacity=0.3] (0,0) -- (0,1.5) -- (-3,1.5) -- cycle;
}
$$
There are two phases on this plane, corresponding to weighted projective space $Z_{W_1}=\P^2_{1,1,n}$ ($W_1=\cone(q_3,q_5)$) and Hirzebruch surface $Z_{W_2}=\F_n$ (for $W_2=\cone(q_1, q_3)$). We have SOD multiplicities (see also the example in the introduction of \cite{BFDKK})
$$ [\P^2_{1,1,n}] = (n+2) [pt], \quad [\F_n] = 4 [pt]. $$
This means a local curve transverse to wall $W_1$ will intersect $\nabla_{GKZ}$ at $n+2$ points, and a local curve transverse to wall $W_2$ will intersect $\nabla_{GKZ}$ at $4$ points. \footnote{Our calculation seems to be different from the formula suggested in Remark 4.7 of \cite{halpern2016autoequivalences}, which always predicts $4$ no matter which wall the curve intersects. Their conjecture that the total A-side multiplicity equals B-side length of full exceptional collection on the wall is true and is proven by \cite{horja2022discriminants}. }
% This agrees with the expected size of the exceptional collection. 

One can also verify that $\nabla^{B}_{GKZ}$ here is a balanced tropical complex. Consider a local tropical curve $C$ with tangent direction $(0,0,1)$ intersects $\nabla^{B}_{GKZ}$ near point $q_3$. If $C$ intersects at $W_1$, then we get  intersection multiplicity 
$$ C \cdot \nabla^{B}_{GKZ} = (C \cdot \la W_1 \ra) \cdot  \rank (Z_{W_1}) = 1 \cdot (n+2) = n+2, $$
where tropical intersection number $C \cdot W$ is defined as the unsigned volume $|\det (v_1 \wedge v_2 \wedge v_3)|$, where $v_i \in \Z^3$ and $v_1$ is primitive generator of $C$ and $v_2, v_3$ are generators of the sublattice $\span_\R(W) \cap \Z^3$.  
If we move the curve to the right, $C$ intersects with $W_2$ and an additional wall $W_3=\cone(q_3, q_6)$, and we get 
$$ C \cdot \nabla_{GKZ}^{B} = (C \cdot W_2) \rank(Z_{W_2}) + (C \cdot W_3)  \rank(Z_{W_3}) = 1 \cdot 4 + (n-2) \cdot 1 = n+2, $$ 
where $Z_{W_3} = pt$. 

\end{example}

\appendix
\section{Adaptation of GKZ's setup}
In the paper, we quoted results from \cite{GKZ-book} whereas the setup do not quite match. In the appendix, we show that with minor modification, the generalized case reduces to the original GKZ setup, hence the GKZ conclusion still applies. 
In GKZ, one start with a finite set $A \In \Z^{k-1}$, where as in our setup, we can have a map $A: \Z^N \to \NN$, where $\NN$ can have torsion and $a_i=A(e_i)$ can coincide with each other. Here in the appendix we show that the difference in the setup is minor. 

First, we generalize $A$ to be a ``multi-set'' while keeping $\NN$ a lattice. Suppose we have $A: \Z^N \to \Z^{k-1} \times \Z$, and let $A'$ denote the image set $\{A(e_i)\}$, then we have a map $s: [N] \onto A'$. We claim that the GKZ discriminant loci for $A$ is a pull-back of that for $A'$, where the fiber is a product of pair-of-pants. Indeed, consider the example,
$$ A: \Z^3 \to \Z, \quad e_i \mapsto 1 $$
then the corresponding $W = (c_1 + c_2 + c_3) z$, and the discrimiants happens exactly at $c_1+c_2+c_3=0$, which after quotient by $\C^*$ become the pair-of-pants in $\Lv_\CS = (\C^*)^2$. 

Next, we generalize the case where $\NN$ have torsion. Suppose
$Q: (\Z^N)^\vee \to \Lv$ has finite cokernel. Define $\h L^\vee = im(Q) \In \Lv$, then we have 
\be \label{e:LvZ} 0 \to \h L^\vee \to \Lv \to \MM_1 \to 0. \ee
Dualize, we get 
\be \label{e:LZ} 0 \to L \to \h L \to \NN_{tors} \to 0 \ee
where we used that $\NN_{tors} = \Ext^1(\MM_1, \Z)$. In other words, $\h L$ is the saturation of $L$ in $\Z^N$. 
%Hence 
%\be \label{e:hLN} 0 \to \h L \to \Z^N \to \NN_{free} \to 0 \ee
%and $\Hom(-, \C^*)$, we get
\be \label{e:hLCS} 0 \to (\MM_0)_\CS \to (C^*)^N \to \h L_\CS^\vee \to 1. \ee
Apply $\Hom(-, \C^*)$ to Eq \eqref{e:LvZ}, 
\be \label{e:LhLvCS} 0 \to  \MM_1 \to \h L^\vee_\CS \to \Lv_\CS \to 1 \ee
Thus, although the fiber of $(C^*)^N \to L_\CS^\vee$ has $|\MM_1|$ disconnected component of $(\MM_0)_\CS$, and the point $b \in L_\CS^\vee$ is ``bad'' if the function $W$ restrict any component is singular (at finite place or at infinity), we can first work with $\h Q: (\Z^N)^\vee \to \h L^\vee$, and get $\nabla_{GKZ}(\h Q) \In \h L_\CS^\vee$, then  pushforward it along $\h L_\CS^\vee \to \Lv_\CS$ to get $\nabla_{GKZ}(\h Q)$.

\bibliographystyle{amsalpha}
\bibliography{HMS}{}

\end{document}